 \newtheorem{thm}{Theorem}
 \newtheorem{cor}[thm]{Corollary}
 \newtheorem{lem}[thm]{Lemma}
 \newtheorem{prop}[thm]{Proposition}
 \theoremstyle{definition}
 \theoremstyle{remark}
 \newtheorem{rem}[thm]{Remark}
 \theoremstyle{remark}
 \newtheorem{prg}[thm]{}
 \theoremstyle{remark}
 \newtheorem{example}[thm]{Example}
 \theoremstyle{plain}
 \newtheorem{question}[thm]{Question}
\numberwithin{equation}{thm}
\numberwithin{thm}{section}
 \newcommand{\trop}{\mathrm{trop}}
 \newcommand{\an}{\mathrm{an}}
 \newcommand{\Hom}{\mathrm{Hom}}
 \newcommand{\Spec}{\mathrm{Spec}}
 \newcommand{\End}{\mathrm{End}}
 \newcommand{\Pic}{\mathrm{Pic}}
 \newcommand{\ord}{\mathrm{ord}}
 \newcommand{\Gal}{\mathrm{Gal}}
 \newcommand{\GL}{\mathrm{GL}}
 \newcommand{\PGL}{\mathrm{PGL}}
 \newcommand{\SL}{\mathrm{SL}}
 \newcommand{\coker}{\mathrm{coker}}
 \newcommand{\ev}{\mathrm{ev}}
 \newcommand{\opp}{\mathrm{opp}}
 \newcommand{\im}{\mathrm{Im}}
 \newcommand{\Tr}{\mathrm{Tr}}
 \newcommand{\fn}{\mathfrak n}
 \newcommand{\fm}{\mathfrak m}
 \newcommand{\fT}{\mathfrak T}
 \renewcommand{\cH}{\mathcal{H}}
 \newcommand{\cT}{\mathcal{T}}
 \newcommand{\cX}{\mathcal{X}}
\newcommand{\sX}{\mathscr{X}}
\newcommand{\sA}{\mathscr{A}}
\newcommand{\sE}{\mathscr{E}}
\newcommand{\sB}{\mathscr{B}}
 \newcommand{\gm}{\mathbb{G}}
 \newcommand{\gmk}{\mathbb{G}_{m, K}^\an}
 \newcommand{\R}{\mathbb{R}}
 \newcommand{\C}{\mathbb{C}}
 \newcommand{\F}{\mathbb{F}}
 \newcommand{\Q}{\mathbb{Q}}
 \newcommand{\Z}{\mathbb{Z}}
 \newcommand{\T}{\mathbb{T}}
 \newcommand{\M}{\mathbb{M}}
 \newcommand{\G}{\Gamma}
 \newcommand{\To}{\longrightarrow}
 \newcommand{\bs}{\setminus}
 \newcommand{\La}{\Lambda}
 \newcommand{\la}{\lambda}
\newcommand\isom{\ensuremath{\overset{\sim}{\longrightarrow}}}
\newcommand\angles[1]{\ensuremath{\langle#1\rangle}}
\newenvironment{sm}{\left[\begin{smallmatrix}}
                   {\end{smallmatrix}\right]}
\newcommand\smallmat[4]%
\newcommand\inject{\ensuremath{\hookrightarrow}}
\newcommand\surject{\ensuremath{\twoheadrightarrow}}
\newcommand\tensor\otimes
\renewcommand\bar\overline
\begin{document}

\title[Optimal quotients and component groups]{Optimal quotients of Jacobians with toric reduction and component groups}

\author{Mihran Papikian and Joseph Rabinoff}

\address{Department of Mathematics, Pennsylvania State University, University Park, PA 16802}
\email{papikian@math.psu.edu}
\thanks{The first author was supported in part by Simons Foundation.} 

\address{School of Mathematics, Georgia Institute of Technology, 686 Cherry Street, Atlanta, GA 30332}
\email{rabinoff@math.gatech.edu}

\subjclass[2010]{11G18, 14G22, 14G20}

\keywords{Jacobians with toric reduction; Component groups; Modular curves}


\begin{abstract} Let $J$ be a Jacobian variety with toric reduction over a local field $K$.  
Let $J \to E$ be an optimal quotient defined over $K$, where $E$ is an elliptic curve. 
We give examples in which the functorially induced map $\Phi_J \to \Phi_E$ 
on component groups of the N\'eron models is not surjective. 
This answers a question of Ribet and Takahashi. 
We also give various criteria under which $\Phi_J \to \Phi_E$  is surjective, and discuss 
when these criteria hold for the Jacobians of modular curves.  
\end{abstract}

\maketitle

\section{Introduction}

Let $J$ be the Jacobian variety of a smooth, projective, geometrically irreducible curve defined over a field $K$. 
An \textit{optimal quotient} of $J$ is an abelian variety $E$ over $K$ and a smooth surjective 
morphism $\pi: J\to E$ whose kernel is connected, i.e., an abelian variety; cf. \cite[Def. 3.1]{CS}. 
From now on we assume that $E$ is an elliptic curve and $K$ is a local field. 
The following question, originally posed by Ribet and Takahashi, appears in \cite{BakerLetter}: 

\begin{question}\label{qR} Assume $J$ has (purely) toric reduction; see $($\ref{TorRed}$)$ for the definition. 
Is the functorially induced map 
$\pi_\ast: \Phi_J(\bar{k})\to \Phi_E(\bar{k})$ on component groups of the N\'eron models of 
$J$ and $E$ necessarily surjective, where $\bar{k}$ is the algebraic closure of the residue field of $K$? 
\end{question} 

In Section \ref{sec2}, we will construct examples which show that the answer is No, contrary to
the expectation expressed in \cite{BakerLetter}.  
The interest in Question \ref{qR} comes from arithmetic geometry, where for certain modular Jacobians, such as $J_0(p)$ 
over $\Q_p$, the answer was known to be positive; see Section \ref{sMJ}. It is natural then to ask whether the surjectivity of 
the map on component groups is a general geometric property of Jacobians with toric reduction, 
or is a special arithmetic property of modular Jacobians with toric reduction. Our examples 
indicate that the latter is the case. Of course, Question \ref{qR} makes perfect sense 
without assuming that $J$ has toric reduction, but the answer to 
that more general question was known to be negative even for the  
modular Jacobians $J_0(N)$ of small level. The following example is due to William Stein: 

\begin{example}
There is a unique weight-$2$ newform of level $33$ with integer Fourier coefficients, and the 
corresponding optimal quotient of $J_0(33)$ is the elliptic curve $E:y^2+xy=x^3+x^2-11x$. 
Consider the optimal quotient $\pi: J_0(33)\to E$ over $\Q_3$. The reduction of $J_0(33)$ 
over $\Q_3$ is semi-stable but not toric. By \cite[p. 174]{MazurEisen}, 
$\Phi_{J_0(33)}(\overline{\F}_3)\cong \Z/2\Z$. 
On the other hand, $\Phi_E(\overline{\F}_3)\cong \Z/6\Z$, so $\pi_\ast$ is not surjective.  
\end{example}

The idea of our construction giving a negative answer to Question \ref{qR} 
is to take two elliptic curves $E_1$ and $E_2$ over $K$ with multiplicative reduction and non-trivial component groups. 
We show that one can choose a finite subgroup-scheme $G$ of the abelian surface $E_1\times E_2$ such 
that the quotient $J=(E_1\times E_2)/G$ is a Jacobian variety and $\Phi_J=1$. 
Moreover, $E_1$ and $E_2$ are optimal quotients of $J$. 
Due to (\ref{IsogTorRed}), $J$ automatically has toric reduction. 
Clearly the corresponding maps on component groups cannot be surjective. 
The study of Jacobians isogenous to a product of two elliptic curves has a long history, dating back to Legendre and Jacobi. 
In more recent times such Jacobians have found applications in a variety of arithmetic problems, 
for example, the construction of curves with a maximal number of rational points over 
finite fields \cite{SerreHarvard}, or the construction of Jacobians over $\Q$ with 
large rational torsion subgroups \cite{HLP}. 

From the work of Gerittzen, Mumford and others it is known that abelian varieties 
with toric reduction have rigid-analytic uniformizations. 
(In fact any abelian variety has such a uniformization, but we will only
be concerned with the totally degenerate case.)
In Section \ref{sec3}, we investigate 
the map $\pi_\ast: \Phi_J\to \Phi_E$ using analytic techniques. Some of our arguments here 
are inspired by \cite{Analytic}, \cite{RibetLetter} and \cite{Zagier}. 
We show that the Tate period of 
$E$ can be obtained from $J$ via a natural evaluation map. In this construction,
which is a generalization of the constructions due to Gekeler and Reversat \cite{GR}, 
Bertolini and Darmon \cite{BD}, and Takahashi \cite{TakahashiIJNT},  
the uniformizing lattice of $J$ maps to a subgroup in $K^\times$ isomorphic to $\Z/c\Z\oplus \Z$. 
We show that the cokernel of $\pi_\ast$ is isomorphic to $\Z/c\Z$. 
We also show that $c$ is closely related to the denominator of the idemponent in $\End(J)\otimes \Q$ corresponding 
to $E$. These results are of independent interest, and could be useful in the theory 
of Mumford curves. The main theorem of this section is Theorem \ref{thm:when.surj}, which
gives equivalent conditions for $\pi_\ast$ to be surjective. One of these conditions 
shows that Question \ref{qR} can be interpreted as an analogue for Mumford curves of the problem of 
the equality of the degree of modular parametrization of an elliptic curve over $\Q$ 
and the congruence number of the corresponding newform; see Remark \ref{remModDeg}. 
At the end of Section \ref{sec3}, we give two additional criteria for $\pi_\ast$ being surjective, which are   
based on an assumption that $\End(J)$ contains a subring with certain properties; see 
Lemmas \ref{lemGek} and \ref{lemCII}.   

In Section \ref{sMJ}, we discuss Question \ref{qR} in the context of Jacobians of modular curves. 
We show that this question has positive answer for 
\begin{itemize}
\item $J_0(p)$ considered over $\Q_p$ (see Theorem \ref{thmJ0p}), 
\item the Jacobian of Drinfeld modular curve $X_0(\fn)$ of arbitrary level $\fn\in \F_q[T]$ considered over $\F_q(\!(1/T)\!)$ 
(see Theorem \ref{thmJDM}).
\end{itemize}
(Theorem  \ref{thmJ0p} was known, but we give a different proof which relies on Lemma \ref{lemCII}.)
In this section we also point out a mistake in the published literature. 
Let $J^D_0(M)$ be the Jacobian of the Shimura curve over $\Q$ associated with an Eichler order of level $M$ 
in an indefinite quaternion algebra over $\Q$ of discriminant $D$. 
For any prime $p$ dividing $D$ the Jacobian $J^D_0(M)$ has toric reduction over $\Q_p$. 
Theorem 2.4 in \cite{Takahashi} claims that $\pi_\ast$ is surjective in this case. 
The proof of this theorem crucially relies on a result of Bertolini and Darmon \cite[Prop. 4.4]{BD}. Unfortunately, 
the proof of this latter proposition has a gap, cf. (\ref{prgBD}), 
so Question \ref{qR} in this case remains a very interesting open problem. Theorem \ref{thm:when.surj} 
could be useful for a computational investigation of this problem; see Remark \ref{rem:computations}. 


\section{N\'eron models} 
For the convenience of the reader and future reference we collect in this section some 
terminology and facts about abelian varieties and their N\'eron models. 
The standard reference for the theory of N\'eron models is \cite{NM}. 

\begin{prg}
From now on, $K$ will be a field equipped with a nontrivial discrete valuation 
$$
\ord_K: K\to \Z\cup \{+\infty\}.
$$ 
Let $R=\{z\in K\ |\ \ord_K(z)\geq 0\}$ 
be its ring of integers. Let $\fm=\{z\in K\ |\ \ord_K(z)>0\}$ be the 
maximal ideal of $R$, and $k=R/\fm$ be the residue field. We fix a 
uniformizer $\varpi$ of $R$, and assume that the valuation is normalized by $\ord_K(\varpi)=1$. 
Assume further that $k$ is a finite field of characteristic $p$, 
and define the non-archimedean absolute value on $K$ by $|x|=(\# k)^{-\ord_K(x)}$. 
Finally, assume $K$ is complete for the topology defined by this absolute value. Overall,  
our assumptions mean that $K$ is a \textit{local field}. 
It is known that every local field is isomorphic either to a 
finite extension of $\Q_p$, or to the 
field of formal Laurent series $k(\!(x)\!)$.  
We denote by $\C_K$ the completion of an algebraic closure $\bar{K}$ of $K$ with respect to the extension of the absolute value 
(which is itself algebraically closed). 
\end{prg}

\begin{prg}
If $X$ is a scheme over the base $S$ and $T\to S$ is any base change, $X_T$ will denote 
the pullback of $X$ to $T$. If $T=\Spec(A)$, we may also denote this scheme by $X_A$. 
By $X(T)$ we mean the $T$-rational points of the $S$-scheme $X$, and again, if $T=\Spec(A)$, 
we may also denote this set by $X(A)$. 
\end{prg}

\begin{prg}\label{NerMod}
Let $X$ be a scheme over $K$. A \textit{model} of $X$ over $R$ is an $R$-scheme $\sX$ 
such that its generic fiber $\sX_K$ is isomorphic to $X$. Let $A$ be an abelian variety over $K$. 
There is a model $\sA$ of $A$ which is smooth, separated, and of finite type 
over $R$, and which satisfies the following universal property:
For each smooth $R$-scheme $\sX$ and each $K$-morphism $\phi_K: \sX_K\to A$ 
there is a unique $R$-morphism $\phi:\sX\to \sA$ extending $\phi_K$.   
The model $\sA$ is called the \textit{N\'eron model} of $A$. 
It is obvious from the universal property that $\sA$ is uniquely determined by $A$, up to 
unique isomorphism.  Moreover, the group scheme structure of $A$ uniquely extends to 
a commutative $R$-group scheme structure on $\sA$, and $A(K)=\sA(R)$. 
\end{prg}

\begin{prg} The closed fibre $\sA_k$ is usually not connected. Let $\sA_k^0$ 
be the connected component of the identity section. There is an exact sequence 
$$
0\to \sA_k^0\to \sA_k\to \Phi_A\to 0,
$$
where $\Phi_A$ is a finite \'etale group scheme over $k$. The group $\Phi_A$ 
is called the \textit{group of connected components} of $A$. 
\end{prg}

\begin{prg}\label{prg:MapCG} Let $f_K: A\to B$ be a morphism of abelian varieties. 
By the N\'eron mapping property, the morphism $f_K$ extends 
to a homomorphism $f: \sA\to \sB$. Restricting to the closed fibres we get a homomorphism 
$f_k: \sA_k\to \sB_k$. This homomorphism maps $\sA_k^0$ into $\sB_k^0$. Hence 
there are induced homomorphisms $f_k^0: \sA_k^0\to \sB_k^0$ and $f_\ast: \Phi_A\to \Phi_B$. 
We say that $f_\ast$ is surjective, 
if the homomorphism of abelian groups $f_\ast: \Phi_A(\bar{k})\to \Phi_B(\bar{k})$ is surjective. 
\end{prg}

\begin{prg}\label{UnRam} Let $K'$ be an unramified extension of $K$. Let $R'$ be the ring of integers of $K'$. 
Let $f_{K'}:A_{K'}\to B_{K'}$ be the base change of $f_K$ 
to $K'$. Then $f\otimes R': \sA\otimes_R R' \to \sB\otimes_R R'$ is the corresponding morphism of the 
N\'eron models; see \cite[Cor. 7.2/2]{NM}. This implies that 
$f_\ast: \Phi_{A}(\bar{k})\to \Phi_B(\bar{k})$ does not change under unramified field 
extensions of $K$.  
\end{prg}

\begin{prg}\label{TorRed} By a theorem of Chevalley, 
$\sA_k^0$ is uniquely an extension of an abelian variety $B$ by a connected affine 
group $T\times U$ over $k$, where $T$ is an algebraic torus and $U$ is a unipotent algebraic group; 
see \cite[$\S$9.2]{NM}. We say that $A$ has 
\begin{enumerate}
\item \textit{good reduction} if $U$ and $T$ are trivial,
\item \textit{semi-stable reduction} if $U$ is trivial,
\item \textit{toric reduction} if $U$ and $B$ are trivial, 
\item \textit{split toric reduction} if $U$ and $B$ are trivial, and $T$ is a split torus over $k$.  
\end{enumerate}
Some authors say that $A$ has \textit{purely} toric reduction over $K$ when $U$ and $B$ are trivial. 
If $A$ is an elliptic curve, then it is more common to say that $A$ has \textit{multiplicative} (resp.\ \textit{split multiplicative}) 
reduction over $K$, instead of toric (resp.\ split toric) reduction. 
\end{prg}

\begin{prg}\label{IsogTorRed}
If $A$ has toric reduction, and $f_K: A\to B$ is an isogeny, then $f_k^0$ is an isogeny; cf. \cite[Cor. 7.3/7]{NM}. 
This implies that $B$ also has toric reduction. 
If $f_K: B\to A$ is a closed immersion of abelian varieties and $A$ has toric reduction, then 
$f_k^0$ is a closed immersion; see the proof of Theorem 8.2 in \cite{CS}. 
This implies that if $A$ has (split) toric reduction, then any abelian subvariety of $A$ 
also has (split) toric reduction. Denote by $A^\vee$ and $B^\vee$ the abelian 
varieties dual to $A$ and $B$, respectively. Then $f_K$ is an optimal quotient if and only if 
the dual morphism $f_K^\vee: B^\vee\to A^\vee$ is a 
closed immersion; cf. \cite[Prop. 3.3]{CS}.  
\end{prg}

\section{Rigid-analytic constructions}\label{sec3}

First, we briefly review some facts from the 
theory of rigid-analytic uniformization of abelian varieties. The  
abelian varieties in this section are assumed to have split toric reduction over $K$. 
Since an abelian variety with toric reduction acquires split toric reduction 
over an unramified extension of $K$, as far as the questions of surjectivity 
of the maps of component groups are concerned, the assumption that the reduction is split 
is not restrictive; cf. (\ref{UnRam}). 

\begin{prg} Let $\fT:=(\gm_{m, K}^g)^\an$ be the rigid-analytification of $$\gm_{m, K}^g=\Spec K[Z_1, Z_1^{-1}, \dots, Z_g,
Z_g^{-1}].$$ A \textit{character} of $\fT$ is a homomorphism 
of rigid-analytic groups $\chi: \fT\to \gmk$. Denote the group of characters of $\fT$ 
by $\cX(\fT)$. It is known that analytic characters are all algebraic: 
$$
\cX(\fT)=\{Z_1^{n_1}\cdots Z_g^{n_g}\ |\ (n_1,\dots, n_g)\in \Z^g\}.
$$
In fact, a stronger statement is true: any holomorphic, nowhere vanishing function on $\fT$ is a 
constant multiple of an algebraic character (see \cite[$\S$6.3]{FvdP}). 

Consider the group homomorphism 
\begin{align*}
\trop: \fT(\C_K) &\to \Hom(\cX(\fT), \R)\approx \R^g\\ 
x &\mapsto (\chi\mapsto -\log|\chi(x)|). 
\end{align*}
A (split) \textit{lattice} $\La$ in $\fT$ is a free rank-$g$ subgroup of $\fT(K)$ 
such that $\trop:\La\to \R^g$ is injective and its image is a lattice in the classical sense. 
Such $\La$ is discrete in $\fT$, i.e., the intersection of $\La$ with any affinoid subset of 
$\fT$ is finite. Hence we can form the quotient $\fT/\La$ 
in the usual way by gluing the $\La$-translates of a small enough affinoid. 
The Riemann form condition in this setting is the following:
\end{prg}
\begin{thm}\label{thmRF}
$\fT/\La$ is isomorphic to the rigid-analytification of an abelian variety over $K$
if and only if there is a homomorphism
$$
H:\La\to \cX(\fT)
$$
such that $H(\la)(\mu)=H(\mu)(\la)$ for all $\la,\mu\in \La$, and the symmetric bilinear
form 
\begin{align*}
\langle \cdot, \cdot \rangle_H: \La\times \La &\to \Z \\
\la, \mu &\mapsto \ord_K H(\la)(\mu)
\end{align*}
is positive definite.
\end{thm}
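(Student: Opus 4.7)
The plan is to prove this classical theorem of Morikawa, Gerritzen, Mumford and Raynaud by setting up the standard dictionary between ample line bundles on the quotient $\fT/\La$ and pairs $(H,c)$, where $H$ is the Riemann form in the statement and $c:\La\to K^\times$ is a ``quadratic function'' whose associated bilinear form is $H$. I would treat the two directions separately.

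For the ``if'' direction, given $H$, I would first choose a function $c:\La\to K^\times$ satisfying
\[
c(\la+\mu)=c(\la)\,c(\mu)\,H(\la)(\mu)\qquad\text{for all }\la,\mu\in\La,
\]
which exists because the symmetry $H(\la)(\mu)=H(\mu)(\la)$ makes the right-hand side a symmetric $2$-cocycle, hence a coboundary. Then $j(\la,z):=c(\la)\,H(\la)(z)$ satisfies the $1$-cocycle identity $j(\la+\mu,z)=j(\la,z+\mu)\,j(\mu,z)$, so it descends to a line bundle $\cL$ on $\fT/\La$. Its global sections pull back to holomorphic functions $\theta$ on $\fT$ with $\theta(z+\la)=j(\la,z)\,\theta(z)$, and the natural candidate is the theta series
\[
\theta(z)=\sum_{\la\in\La} c(\la)\,H(\la)(z).
\]
This series converges on every affinoid subset of $\fT$ because $-\log|c(\la)H(\la)(z)|$ grows quadratically in $\la$ thanks to positive-definiteness of $\langle\cdot,\cdot\rangle_H$. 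Translating the construction by enough points yields sufficiently many sections so that $\cL^{\otimes n}$ becomes very ample for $n\gg 0$, and rigid-analytic GAGA then upgrades $\fT/\La$ to the analytification of an abelian variety.

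For the ``only if'' direction, suppose $\fT/\La\cong A^{\an}$ for an abelian variety $A/K$, and fix an ample symmetric line bundle $\cL$ on $A$. Pulling back along $p:\fT\to\fT/\La$ gives a line bundle on $\fT$ which is analytically trivial, because $\fT=(\gm_{m,K}^g)^{\an}$ has vanishing analytic Picard group. A trivialization converts the $\La$-action on $p^\ast\cL$ into multiplication by nowhere-vanishing holomorphic functions $j(\la,\cdot)$ on $\fT$, and by the fact quoted in the excerpt that every such function is a constant multiple of an algebraic character, one has $j(\la,z)=c_\la\,\chi_\la(z)$ with $\chi_\la\in\cX(\fT)$ and $c_\la\in K^\times$. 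Setting $H(\la):=\chi_\la$, the $1$-cocycle relation forces $H$ to be a group homomorphism, the symmetry of $\cL$ yields $H(\la)(\mu)=H(\mu)(\la)$, and ampleness of $\cL$ translates into positive-definiteness of $\langle\cdot,\cdot\rangle_H$.

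The main obstacle in both directions is the same: making this dictionary between line bundles on the quotient and Riemann-form data precise. On the ``if'' side the delicate point is the convergence and ampleness of the theta series, both of which hinge on the quadratic growth provided by positive-definiteness. On the ``only if'' side the difficulty lies in proving the analytic triviality of $p^\ast\cL$, and in extracting positive-definiteness from ampleness via the rigid-analytic analogue of the Riemann relations. Both difficulties ultimately rest on the nontrivial structural fact, quoted in the excerpt, that nowhere-vanishing holomorphic functions on $\fT$ are constant multiples of characters; once this is in hand, the remainder of the argument is essentially formal.
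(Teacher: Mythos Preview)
The paper does not give a proof of this theorem at all; its ``proof'' is a bare citation to \cite[Ch.~6]{FvdP} and \cite[\S 2]{BL}. Your sketch is essentially the standard argument one finds in those references: the Appell--Humbert-type description of line bundles on $\fT/\La$ via factors of automorphy $j(\la,z)=c(\la)H(\la)(z)$, convergence of theta series from positive-definiteness, and the converse direction via triviality of $p^\ast\cL$ on $\fT$ together with the fact that invertible holomorphic functions on $\fT$ are constant multiples of characters. So your approach is not different from the paper's---it simply fills in what the paper outsources.

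The outline is correct in spirit; the only places where a referee would press you are (i) the precise statement that a symmetric $2$-cocycle with values in $K^\times$ is a coboundary (this uses that $\La$ is free, so one can just pick values of $c$ on a basis and extend), (ii) the passage from ``enough sections'' to ``very ample'' for $\cL^{\otimes n}$, which in the rigid setting requires some care beyond counting theta functions, and (iii) the derivation of positive-definiteness from ampleness, which is the genuinely nontrivial direction and is where the cited references do real work. None of these is a gap in your strategy, but each would need to be fleshed out or cited if this were meant to stand as a self-contained proof rather than a sketch.
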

\begin{proof}
See \cite[Ch. 6]{FvdP} or \cite[$\S$2]{BL}. 
\end{proof}

\begin{prg} Let $A$ be an abelian variety of dimension $g$ defined over $K$. 
We say that $A$ is \textit{uniformizable by a torus} if $A^\an\cong \fT/\La$ 
for some lattice $\La$. 
\end{prg}

\begin{thm}\label{thmUnif}
An abelian variety over $K$ is uniformizable by a torus if and only if it has split toric reduction. 
\end{thm}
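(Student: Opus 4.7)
The theorem is a classical result due to Mumford and Raynaud, so the proof I would write would be essentially a reference to \cite[Ch.~6]{FvdP} and \cite[\S1]{BL}, accompanied by a sketch of the two directions.

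For the ``only if'' direction, assume $A^{\an}\cong \fT/\La$. The plan is to produce a formal $R$-model of $A^{\an}$ whose special fiber reveals split toric reduction. Choose a $\trop(\La)$-invariant rational polyhedral decomposition of $\R^g = \Hom(\cX(\fT),\R)$, and pull back via $\trop$ to cover $\fT$ by $\La$-invariant affinoid subdomains. Each polytope yields a standard formal affine model whose special fiber is a product of split tori (and gluing data from neighbouring polytopes produces the expected toric degenerations). Quotienting by the free and discrete action of $\La$ gives an admissible formal $R$-scheme $\fA$ with generic fiber $A^{\an}$. By construction the identity component of $\fA_k$ is isomorphic to $\gm_{m,k}^g$; since the Néron model extracts this identity component canonically, $A$ has split toric reduction.

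For the ``if'' direction, let $\sA$ denote the Néron model, so that $\sA_k^0\cong\gm_{m,k}^g$. First I would show that the formal completion $\widehat{\sA^0}$ along its special fiber is isomorphic, as a formal $R$-group, to $\widehat{\gm_{m,R}^g}$: smoothness of $\sA^0$ and the rigidity of tori allow one to lift a trivialization from $\sA_k^0$ to all infinitesimal neighbourhoods, and $\varpi$-adic completeness then assembles these into a formal-group isomorphism. The rigid generic fiber of this formal group is an open affinoid subgroup $\fT_0\subset A^{\an}$ isomorphic to the unit polytorus $\{|Z_i|=1\}$. The decisive step is to extend $\fT_0$ to an entire $\fT=(\gm_{m,K}^g)^{\an}$ embedded (analytically locally) in the universal cover of $A^{\an}$, and to identify the kernel of the resulting covering $\fT\to A^{\an}$ with a lattice $\La$ satisfying the Riemann form condition. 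This is carried out by constructing ``theta series'' in $H^0(A^{\an},\cL)$ for an ample line bundle $\cL$ on $A$ (available via formal GAGA and algebraization of $\cL|_{\widehat{\sA^0}}$), whose zero sets and quasi-periodicity yield both the characters $\cX(\fT)$ and the lattice $\La$ with a positive-definite Riemann form.

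The main obstacle is the ``if'' direction. Passing from the formal multiplicative completion $\widehat{\sA^0}$ back to an honest rigid analytic torus covering $A^{\an}$ is not automatic, because $\fT$ is not quasi-compact and has no finite-type formal model. One has to produce enough algebraic characters by analytic means, and this is where the ample line bundle (via Grothendieck's existence theorem applied to the formal completion along the identity component) does the essential work. I would defer the technical details to \cite[\S1]{BL}, where the argument is carried out in full, and simply verify that the criteria in those references apply verbatim under our hypothesis of split toric reduction over the local field $K$.
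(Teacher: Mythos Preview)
Your proposal is correct and matches the paper's approach: the paper's entire proof is the single line ``See \cite[\S1]{BL}'', so your reference to \cite[\S1]{BL} (and \cite[Ch.~6]{FvdP}) is exactly what the paper does. Your additional sketch of the two directions is a reasonable expansion, but the paper itself gives no argument beyond the citation.
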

\begin{proof}
See \cite[$\S$1]{BL}.  
\end{proof}

\begin{prg}\label{polarization} If 
$A$ has split toric reduction, then $A^\vee$ also has split toric reduction; cf. (\ref{IsogTorRed}). 
Let $\fT/\La$ be the uniformization of $A$. Denote 
$$
\fT^\vee=\Hom(\La, \gmk)\quad \text{and}\quad \La^\vee=\Hom(\fT, \gmk). 
$$
Note that $\La^\vee$ is the group of characters $\cX(\fT)$. We have a natural bilinear pairing 
$\La^\vee\times \fT(K)\to K^\times$ given by evaluation of characters on the points of $\fT$. 
For a fixed $\la'\in \La^\vee$, this pairing induces by restriction a 
homomorphism $\La\to K^\times$, $\la\mapsto \la'(\la)$, 
and hence a $K$-valued point in $\fT^\vee$. If we vary $\la'\in \La^\vee$, we obtain a 
canonical homomorphism $\La^\vee\to \fT^\vee$, which is easy to see is the 
dual of $\La\to \fT$.  Hence $\La^\vee$ is naturally a lattice in $\fT^\vee$, and we 
can form the quotient $\fT^\vee/\La^\vee$ as a proper rigid-analytic group. 
As one might expect, $\fT^\vee/\La^\vee$ is 
canonically isomorphic to $(A^\vee)^\an$; see \cite[Thm. 2.1]{BL}. 
Let $H: \La\to \La^\vee$ be a Riemann form for $A$. Applying $\Hom(\ \cdot\ , \gmk)$ to $H$, we 
get a surjective homomorphism $H_\fT: \fT\to \fT^\vee$. From the definitions it is easy to see that 
the restriction of $H_\fT$ to $\La\subset \fT$ is $H$. Hence we get a homomorphism 
$H_{A^\an}: A^\an\to (A^\vee)^\an$. By GAGA, $H_{A^\an}$ canonically corresponds to a 
homomorphism $H_A: A\to A^\vee$. 
Since $H$ is injective with finite cokernel, $H_A$ is an isogeny. 
In fact, one can show that $H_A$ is a polarization and every polarization arises in this manner; cf. \cite[$\S$2]{BL}. 
\end{prg}

\begin{prg} \label{polarization.symmetric}
  More symmetrically, let $\Lambda$ and $\Lambda^\vee$ be two finitely
  generated free abelian groups of the same rank and let
  $[\cdot,\cdot]:\Lambda\times\Lambda^\vee\to K^\times$ be a bilinear
  pairing such that the pairing
  \[ \angles{\cdot,\cdot} = \ord_K\circ[\cdot,\cdot] ~:~
  \Lambda\times\Lambda^\vee\to\Z \]
  becomes perfect after extending scalars from $\Z$ to $\R$.  
  Let $\fT = \Hom(\Lambda^\vee,\gmk)$ and 
  $\fT^\vee = \Hom(\Lambda,\gmk)$.  Then $[\cdot,\cdot]$ defines injective
  homomorphisms $\Lambda\inject\fT(K)$ and
  $\Lambda^\vee\inject\fT^\vee(K)$, the images of which are lattices.  
  With these notations, a Riemann form is a homomorphism
  $H:\Lambda\to\Lambda^\vee$ such that 
  $[\cdot,\cdot]_H = [\cdot,H(\cdot)]$ is symmetric and 
  $\angles{\cdot,\cdot}_H = \angles{\cdot,H(\cdot)}$ is
  positive-definite.  If such a form exists, then 
  $\fT/\Lambda$ and $\fT^\vee/\Lambda^\vee$ are dual abelian varieties.

\end{prg}

\begin{prg}\label{eqFunct} Let $A_1^\an =\fT_1/\La_1$ and $A_2^\an=\fT_2/\La_2$ be uniformizable abelian varieties.  
Let $\Hom(\fT_1, \La_1; \fT_2, \La_2)$ denote the group of homomorphisms $\varphi: \fT_1\to \fT_2$ 
of analytic tori such that $\varphi(\La_1)\subset \La_2$. By a result of Gerritzen  \cite{Gerritzen1}, the natural map 
\begin{equation*}
\Hom(\fT_1, \La_1; \fT_2, \La_2) \to \Hom(A_1, A_2)
\end{equation*}
is a bijection (see also \cite[$\S$7]{SGA7}).

Following the notations in (\ref{polarization.symmetric}), for $i=1,2$ let
$\Lambda_i^\vee = \cX(\fT_i)$, let $\fT_i^\vee$ be the torus with
character lattice $\Lambda_i$, let
$[\cdot,\cdot]_i:\Lambda_i\times\Lambda_i^\vee\to K^\times$ denote the
pairing induced by the inclusion $\Lambda_i\inject\fT_i(K)$, and let
$\angles{\cdot,\cdot}_i = \ord\circ[\cdot,\cdot]_i$.  Let
$\varphi\in\Hom(\fT_1,\Lambda_1;\fT_2,\Lambda_2)$.  Then
$\varphi$ is determined by the induced homomorphism
$\varphi^\vee: \Lambda_2^\vee\to\Lambda_1^\vee$ of character groups, and
since $\varphi(\Lambda_1)\subset\Lambda_2$, we have
\begin{equation}  \label{eq:homom.uniformized}
  [\varphi(\lambda_1),\, \lambda_2^\vee]_2 = 
  [\lambda_1,\, \varphi^\vee(\lambda_2^\vee)]_1 
\end{equation}
for all $\lambda_1\in\Lambda_1$ and 
$\lambda_2^\vee\in\Lambda_2^\vee$.  We can therefore define 
$\Hom(\fT_1,\Lambda_1;\fT_2,\Lambda_2)$ more symmetrically as the group
of pairs $(\varphi,\varphi^\vee)$ of homomorphisms 
$\varphi:\Lambda_1\to\Lambda_2$ and
$\varphi^\vee:\Lambda_2^\vee\to\Lambda_1^\vee$ 
satisfying~\eqref{eq:homom.uniformized}. 
Since $\angles{\cdot,\cdot}_i$ is nondegenerate for $i=1,2$, it is clear
that $\varphi$ and $\varphi^\vee$ determine each other.  If
$(\varphi,\varphi^\vee)\in\Hom(\fT_1,\Lambda_1;\fT_2,\Lambda_2)$
corresponds to the homomorphism $f:A_1\to A_2$ then 
$(\varphi^\vee,\varphi)\in\Hom(\fT_2^\vee,\Lambda_2^\vee;\fT_1^\vee,\Lambda_1^\vee)$
corresponds to the dual homomorphism $f^\vee:A_2^\vee\to A_1^\vee$.

Now let $H_i:\Lambda_i\isom\Lambda_i^\vee$ be Riemann forms determining
principal polarizations $A_i\isom A_i^\vee$ for $i=1,2$.  Using
$H_i$ to identify $\Lambda_i$ with $\Lambda_i^\vee$, we can describe an
element of $\Hom(\fT_1,\Lambda_1;\fT_2,\Lambda_2)$ as a pair
$(\varphi,\varphi^\vee)$, where $\varphi:\Lambda_1\to\Lambda_2$ and
$\varphi^\vee:\Lambda_2\to\Lambda_1$ are homomorphisms
satisfying
\begin{equation}  \label{eq:homom.principal}
  [\varphi(\lambda_1),\, \lambda_2]_{H_2} = 
  [\lambda_1,\, \varphi^\vee(\lambda_2)]_{H_1} 
\end{equation}
for all $\lambda_1\in\Lambda_1$ and $\lambda_2\in\Lambda_2$.  As above, if
$(\varphi,\varphi^\vee)$ corresponds to the homomorphism $f:A_1\to A_2$ then
$(\varphi^\vee,\varphi)$ corresponds to the dual homomorphism
$f^\vee:A_2\cong A_2^\vee\to A_1^\vee\cong A_1$. 

\end{prg}

\begin{prop}\label{prop3.3} 
Assume $A^\an\cong \fT/\La$ is a principally polarizable abelian variety. 
Fix a principal polarization $H:\La\xrightarrow{\sim}\cX(\fT)$. An endomorphism 
$T\in \End(A)$ induces an endomorphism of $\La$, which we denote by the same letter. 
Let $T^\dag\in \End(A)$ be the image of $T$ under the Rosati involution 
with respect to the principal polarization $H$. Then for any $\la, \mu\in \La$, 
\begin{equation*}
H(T\la)(\mu)=H(\la)(T^\dag \mu). 
\end{equation*}
\end{prop}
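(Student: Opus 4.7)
The plan is to translate everything into the uniformized picture via paragraph~(\ref{eqFunct}), identify the Rosati involution as the ``swap'' operation on the pair $(\varphi,\varphi^\vee)$, and then use the symmetry of the Riemann form to rearrange into the stated equality.

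First, I would use the bijection of (\ref{eqFunct}). Write $\Lambda^\vee = \cX(\fT)$, and let $[\cdot,\cdot]:\Lambda\times\Lambda^\vee\to K^\times$ be the evaluation pairing $[\lambda,\lambda^\vee] = \lambda^\vee(\lambda)$. Under Gerritzen's theorem the endomorphism $T\in\End(A)$ corresponds to a pair $(\varphi,\varphi^\vee)$ with $\varphi:\Lambda\to\Lambda$ (induced by a map $\fT\to\fT$ sending $\Lambda$ to $\Lambda$) and $\varphi^\vee:\Lambda^\vee\to\Lambda^\vee$, satisfying the adjointness
\[
[\varphi(\lambda),\mu^\vee] \;=\; [\lambda,\varphi^\vee(\mu^\vee)]
\qquad (\lambda\in\Lambda,\ \mu^\vee\in\Lambda^\vee),
\]
which is just $\mu^\vee(\varphi(\lambda)) = \varphi^\vee(\mu^\vee)(\lambda)$. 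The map $\varphi$ is simply $T|_\Lambda$, and $\varphi^\vee$ is (by GAGA and the last sentence of (\ref{eqFunct})) the restriction to the character lattice of the dual $T^\vee:A^\vee\to A^\vee$.

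Next, I would invoke the definition of the Rosati involution: $T^\dag = H_A^{-1}\circ T^\vee\circ H_A$. Using $H$ to identify $\Lambda$ with $\Lambda^\vee$, the dual homomorphism $T^\vee$ transported to an endomorphism of $A$ becomes $T^\dag$, and its uniformized incarnation is precisely the second component $\varphi^\vee$, now regarded as a map $\Lambda\to\Lambda$ via $H$. Substituting $\mu^\vee = H(\mu)$ into the adjointness relation above then gives
\[
H(\mu)\bigl(T\lambda\bigr) \;=\; H\bigl(T^\dag\mu\bigr)(\lambda)
\qquad (\lambda,\mu\in\Lambda).
\]
This is essentially the desired identity with the roles of $\lambda$ and $\mu$ on each side reversed.

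Finally, I would apply the symmetry $H(\lambda)(\mu)=H(\mu)(\lambda)$ from Theorem~\ref{thmRF} (which holds because $H$ is a Riemann form defining a principal polarization). Applied to the left-hand side this rewrites $H(\mu)(T\lambda) = H(T\lambda)(\mu)$, and applied to the right-hand side it rewrites $H(T^\dag\mu)(\lambda) = H(\lambda)(T^\dag\mu)$. Combining the two rewrites yields $H(T\lambda)(\mu) = H(\lambda)(T^\dag\mu)$, as desired.

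The only subtlety, and what I would watch carefully, is step two: verifying that under the identifications $H:\Lambda\cong\Lambda^\vee$ and $H_A:A\cong A^\vee$, the uniformization of $T^\dag$ really is the map $\varphi^\vee$ obtained from $T^\vee$, so that the bookkeeping of duals and transposes comes out correctly. Once that identification is nailed down, the remainder is just symmetry of $H$.
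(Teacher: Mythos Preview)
Your proof is correct and follows essentially the same line as the paper's: both identify $T$ with a pair $(\varphi,\varphi^\vee)$ via (\ref{eqFunct}), use that the Rosati involution corresponds to the swap $(\varphi,\varphi^\vee)\mapsto(\varphi^\vee,\varphi)$, and then read off the identity from the adjointness relation. The only cosmetic difference is that the paper works directly with the symmetric pairing $[\cdot,\cdot]_H$ (so the symmetry of $H$ is absorbed into the notation), whereas you apply the symmetry of $H$ explicitly at the end; the ``subtlety'' you flag is exactly what the paper dispatches by citing the last paragraph of (\ref{eqFunct}).
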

\begin{proof} 
Let $\Lambda^\vee = \cX(\fT)$ and let
$[\cdot,\cdot]:\Lambda\times\Lambda^\vee\to K^\times$ be the pairing
induced by the inclusion $\Lambda\inject\fT(K)$, as
in~(\ref{polarization.symmetric}).  By (\ref{eqFunct}), we can describe
$T$ as a pair of endomorphisms
$\varphi,\varphi^\vee: \Lambda\to\Lambda$ satisfying 
\[ H(\varphi(\lambda))(\lambda') 
= [\varphi(\lambda),\,\lambda']_H = [\lambda,\,\varphi^\vee(\lambda')]_H 
= H(\lambda)(\varphi^\vee(\lambda')) \]
for all $\lambda,\lambda'\in\Lambda$.  The endomorphism
$T^\dag$ then corresponds to the pair $(\varphi^\vee,\varphi)$.  Under
these identifications, the endomorphism of $\Lambda$ induced by $T$
(resp.\ $T^\dag$) is exactly $\varphi$ (resp.\ $\varphi^\vee$).
\end{proof}

\begin{prg}
Let $J:=\Pic^0_{X/K}$ be the Jacobian variety of a smooth, projective, geometrically irreducible  
curve $X$ over $K$. Assume 
$J$ has split toric reduction; this is equivalent to $X$ being a Mumford curve. 
Let $H$ be the canonical principal polarization on $J$.  The uniformization of 
$J$ is given by 
\begin{equation*}
0\to \La\xrightarrow{H}\Hom(\La, \C_K^\times)\to J(\C_K)\to 0. 
\end{equation*}

Let $E$ be an elliptic curve which is an optimal quotient $\pi:J\to E$. 
Using the canonical principal polarizations on $E$ and $J$, 
we can consider $E$ as an abelian subvariety of $J$ via the dual morphism $\pi^\vee: E\hookrightarrow J$; cf. (\ref{IsogTorRed}). 
Sometimes to emphasize that we consider $E$ as the image of $\pi$ (resp.\ the domain of $\pi^\vee$) we 
will write $E_\ast$ (resp.\ $E^\ast$). 
\end{prg}

To simplify the notation, we will 
denote the pairing $\langle\cdot, \cdot \rangle_H$ of Theorem \ref{thmRF} for the canonical 
principal polarization on $J$ by $\langle\cdot, \cdot \rangle$.  Likewise
we denote the pairing $[\cdot,\cdot]_H:\Lambda\times\Lambda\to K^\times$
of~(\ref{polarization.symmetric}) by $[\cdot,\cdot]$.

\begin{prg}Since $E$ is a subvariety of $J$, it has split toric reduction; 
cf.~(\ref{IsogTorRed}). Therefore 
$E$ is uniformizable by a torus: 
\begin{equation} \label{eq:unif.E}
0\to \G\to \C_K^\times \to E(\C_K)\to 0,  
\end{equation}
where $\G$, as a subgroup of $\C_K^\times$, is $q_E^\Z$ for some $q_E\in \C_K^\times$ with $\ord_K(q_E)>0$. 
More precisely, since $E$ carries a canonical principal polarization, it
is uniformized by the torus $\Hom(\Gamma,\C_K^\times)$; fixing a generator
$\rho$ of $\Gamma$, we identify 
$\Hom(\Gamma,\C_K^\times)$ with $\C_K^\times$ via the isomorphism
$f\mapsto f(\rho)$.  
By (\ref{eqFunct}), the closed immersion $\pi^\vee: E\to J$ induces a
homomorphism $\pi^\vee:\G\to \La$ and a homomorphism
of tori $\C_K^\times\to\Hom(\Lambda,\C_K^\times)$ making following diagram
commute:
\begin{equation} \label{eq:immersion.unif}
\xymatrix @=.25in{ 
  0 \ar[r] & \Gamma \ar[d]_{\pi^\vee} \ar[r] & 
  {\C_K^\times} \ar[r] \ar[d] &
  {E(\C_K)} \ar[r] \ar[d]^{\pi^\vee} & 0 \\
  0 \ar[r] & \Lambda \ar[r] & 
  {\Hom(\Lambda,\C_K^\times)} \ar[r] & {J(\C_K)} \ar[r] & 0
}\end{equation}
It is easy to see that the vertical arrows in~\eqref{eq:immersion.unif}
are injective.
In general, $\pi^\vee(\G)$ need not be saturated in $\La$, i.e., the abelian group $\La/\pi^\vee(\G)$ 
might have non-trivial torsion. Let $\G'$ be the saturation of $\pi^\vee(\G)$ in $\La$. 
We can write 
$$
\pi^\vee(\rho)=c\cdot \la_E, 
$$
where $c$ is a uniquely determined positive integer, $\la_E$ is a
generator of $\G'$, and $\rho$ is our fixed generator of $\Gamma$.
\end{prg}

\begin{prg}
  Let $\pi: \Lambda\to\Gamma$ be the homomorphism of character groups
  associated to the middle vertical arrow of~\eqref{eq:immersion.unif}.
  The homomorphism $\pi^\vee: \Gamma\to\Lambda$ induces the homomorphism of
  tori 
  $\ev_\rho:
  \Hom(\Lambda,\C_K^\times)\to\Hom(\Gamma,\C_K^\times)=\C_K^\times$ 
  given by $\ev_\rho(f) = f(\pi^\vee(\rho))$.
  By the discussion in~(\ref{eqFunct}), the following diagram commutes:
  \begin{equation} \label{eq:parameterization.unif}
    \xymatrix @=.25in{ 
      0 \ar[r] & \Lambda \ar[r] \ar[d]_\pi & 
      {\Hom(\Lambda,\C_K^\times)} \ar[r] \ar[d]_{\ev_\rho} & 
      {J(\C_K)} \ar[r] \ar[d]^\pi & 0 \\
      0 \ar[r] & \Gamma \ar[r] & 
      {\C_K^\times} \ar[r] &
      {E(\C_K)} \ar[r] & 0 
    }\end{equation}
  It is easy to see that the vertical arrows
  in~\eqref{eq:parameterization.unif} are surjective.
  \end{prg}
  
  \begin{prg}\label{prg-c}
  Let $c^{-1}\Gamma = \{ x\in\C_K^\times~|~x^c\in\Gamma \}$.  Since
  $\Gamma = q_E^\Z$ we have $c^{-1}\Gamma = \mu_c\times w^\Z$, where
  $\mu_c\subset\C_K^\times$ is the group of $c$-th roots of unity and 
  $w$ is any $c$-th root of $q_E$.  In particular, 
  \begin{equation}\label{eqOrdw}
    \ord_K(q_E)=c\cdot \ord_K(w). 
  \end{equation}
  Define
  $\ev_E:\Hom(\Lambda,\C_K^\times)\to\C_K^\times$ by
  $\ev_E(f) = f(\lambda_E)$.  Then $\ev_E^c = \ev_\rho$, so we
  have a commutative diagram
  \begin{equation} \label{eq:ev.E}
    \xymatrix @=.25in{ 
      0 \ar[r] & \Lambda \ar[r] \ar[d] & 
      {\Hom(\Lambda,\C_K^\times)} \ar[r] \ar[d]_{\ev_E} & 
      {J(\C_K)} \ar[r] \ar[d]^\pi & 0 \\
      0 \ar[r] & {c^{-1}\Gamma} \ar[r] & 
      {\C_K^\times} \ar[r] &
      {E(\C_K)} \ar[r] & 0 
    }\end{equation}
  where the map $\C_K^\times\to E(\C_K)$ in~\eqref{eq:ev.E}
  is the $c$-th power of the one in~\eqref{eq:parameterization.unif}.
  We claim that the vertical arrows in~\eqref{eq:ev.E} are again surjective.
  Since $\ev_E$ is surjective, by the snake lemma it suffices to prove that
  $\ker(\ev_E)\to\ker(\pi)$ is surjective.  Let $x\in\ker(\pi)$.  Since
  $\ker(\pi)$ is an abelian subvariety of $J$, it is divisible; choose 
  $y\in\ker(\pi)$ such that $cy = x$.  Since $\ker(\ev_\rho)$ surjects
  onto $\ker(\pi)$ there exists $z\in\ker(\rho)$ such that
  $z\mapsto y$.  Then $z^c\mapsto x$ and $\ev_E(z^c) = \ev_\rho(z)=1$,
  which proves the claim. This implies 
  \begin{equation} \label{eq:cinvGamma}
  c^{-1}\G=\{[\la, \la_E]\ |\ \la\in \La\}\subset K^\times. 
  \end{equation}
  In particular, $c$ divides the order of the group of roots of unity in $K$. 
\end{prg}

\begin{prg}
The endomorphism 
$$
e_0=\pi^\vee\circ\pi: J\to J
$$ 
corresponds to an idempotent $e\in \End^0(J):=\End(J)\otimes_\Z \Q$. 
Up to isogeny, we can decompose 
$$
J\sim_K A_1\times A_2\times\cdots \times A_s,
$$
where $A_i$'s are $K$-simple abelian varieties. This decomposition produces idempotents 
$$e_1, \dots, e_s \in \End^0(J)$$ which are mutually orthogonal: $e_ie_j=0$ if $i\neq j$. 
The idempotent $e$ is one of those. 
The $\Q$-bilinear form $B(x,y)=\Tr(xy^\dag)$ on $\End^0(A)$ is symmetric and 
positive definite (here the Rosati involution is with respect to the canonical principal polarization $H$). 
This implies that the Rosati involution must fix each idempotent $e_i$.  
Therefore $e^\dag=e$, and also $e_0^\dag=e_0$.  This observation will
simplify some calculations and is useful in the following paragraph.

We denote by $n$ the denominator of $e$ in $\End(J)$, i.e., the least 
natural number such that $ne\in \End(J)$. 
Note that (\ref{eqFunct}) implies that $\End(J)$ 
is naturally a subring of $\End(\La)$ when we regard $\Lambda$ as the
lattice uniformizing $J$, and $\End(J)$ is a subring of $\End(\La)^\opp$
when we regard $\Lambda$ as the character group of the torus uniformizing
$J$.  By Proposition~\ref{prop3.3} and the above discussion, the image of
$e_0$ in $\End(\La)$ is the same under either identification.
We define the denominator $r$ of $e$ in $\End(\La)$ as the least
natural number such that  
$re\in \End(\La)$. Obviously, $r$ divides $n$. 
\end{prg}

\begin{lem}\label{lemZagier}
The morphism $\pi\circ\pi^\vee: E^\ast\to E_\ast$ is the multiplication-by-$n$ map on $E$. 
\end{lem}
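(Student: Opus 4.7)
Proof plan:

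My plan is to reduce the computation of $\pi\circ\pi^\vee$ to a direct lattice computation on the uniformization, and then identify the resulting integer with $n$.

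By the surjectivity of the middle vertical arrow of~\eqref{eq:parameterization.unif}, the lattice map $\pi:\Lambda\to\Gamma$ is surjective, so I may write $\pi(\lambda) = a(\lambda)\rho$ for a surjective $\Z$-linear map $a:\Lambda\to\Z$. Combined with $\pi^\vee(\rho) = c\lambda_E$, this gives
\[
e_0(\lambda) = \pi^\vee\bigl(a(\lambda)\rho\bigr) = c\,a(\lambda)\,\lambda_E \quad\text{on } \Lambda,\qquad
\pi\pi^\vee(\rho) = \pi(c\lambda_E) = c\,a(\lambda_E)\,\rho \quad\text{on } \Gamma.
\]
By the functoriality in~(\ref{eqFunct}), the second identity means that $\pi\circ\pi^\vee$ is multiplication by the integer $m := c\,a(\lambda_E)$ on $E$. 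The relation $e_0^2 = \pi^\vee(\pi\pi^\vee)\pi = m\,e_0$ then shows $e = e_0/m\in\End^0(J)$ is idempotent, and since $me = e_0\in\End(J)$, we have $n\mid m$.

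For the reverse divisibility, I compute the denominator $r$ of $e$ in $\End(\Lambda)$: the action on $\Lambda\otimes\Q$ is $\lambda\mapsto (a(\lambda)/a(\lambda_E))\,\lambda_E$, so $re\in\End(\Lambda)$ forces $r\,a(\lambda)/a(\lambda_E)\in\Z$ for every $\lambda$; using surjectivity of $a$ (take $\lambda$ with $a(\lambda)=1$) and the primitivity of $\lambda_E$ in $\Gamma'$, one obtains $r = a(\lambda_E)$. Since $e$ is self-dual under the Rosati involution (as noted in the paragraph preceding the lemma), Proposition~\ref{prop3.3} and the subsequent discussion identify the two embeddings of $\End(J)$ into $\End(\Lambda)$ on $e$, so $n = r = a(\lambda_E)$.

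The final step is to reconcile $n = a(\lambda_E)$ with $m = c\,a(\lambda_E)$, which amounts to verifying $c = 1$ for the optimal quotient $\pi$. Since $\pi^\vee:E\hookrightarrow J$ is a closed immersion, it induces an isomorphism of abelian varieties $E\isom\pi^\vee(E)\subset J$; by the uniqueness of the canonical rigid-analytic uniformization of an abelian variety with split toric reduction, this isomorphism lifts to an isomorphism of the canonical uniformizations $\gm/\Gamma\isom\fT_E/\Lambda_E$, where $\fT_E = \pi^\vee(\C_K^\times)\subset\fT$ and $\Lambda_E = \Lambda\cap\fT_E$. Consequently, $\pi^\vee(\Gamma) = \Lambda_E$ is saturated in $\Lambda$, giving $c = 1$ and hence $n = m$, as required.

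The principal obstacle is the saturation argument $c = 1$: one must carefully invoke that closed immersions of abelian varieties with split toric reduction correspond to saturated inclusions of uniformizing lattices, so that the image $\pi^\vee(\Gamma)$ coincides with $\Lambda\cap\fT_E$ rather than a proper sublattice of finite index.
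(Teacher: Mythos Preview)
Your argument contains a fatal error: the claims $n = r$ and $c = 1$ are false in general. By~\eqref{eq:r.n.c} these two statements are equivalent to one another, and by Theorem~\ref{thm:when.surj} they are equivalent to the surjectivity of $\pi_\ast:\Phi_J\to\Phi_E$. The entire purpose of Section~\ref{sec2} is to construct explicit examples with $c > 1$, so you are assuming precisely what the paper shows can fail. Your justification for $n = r$ is a non sequitur: the identity $e^\dag = e$ only says that $e_0$ has the same image under the two embeddings of $\End(J)$ into $\End(\Lambda)$; it says nothing about the inclusion $\End(J)\subset\End(\Lambda)$ being an equality or about denominators coinciding. Your saturation argument for $c = 1$ is subtler but equally wrong. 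It is correct that $\pi^\vee(\Gamma) = \Lambda\cap\fT_E$, but a sublattice of this form need not be saturated in $\Lambda$: a one-dimensional subtorus is not closed under extraction of roots in the ambient torus, so $\lambda_E^c\in\fT_E(K)$ does not force $\lambda_E\in\fT_E(K)$. In the example of~(\ref{analyticJ}) one has $\fT_E = \C_K^\times\times\{1\}$ and $\lambda_E = (q_1,\zeta)$ with $\zeta$ a primitive $c$-th root of unity; then $\lambda_E^c = (q_1^c,1)\in\fT_E(K)$ while $\lambda_E\notin\fT_E(K)$.

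The paper simply cites Zagier, whose argument is purely algebraic and does not use the uniformization. Since $\pi\circ\pi^\vee$ is Rosati-fixed in $\End(E)$ it equals $[m]$ for some integer $m > 0$, so $e = e_0/m$ and $n\mid m$. For the reverse divisibility, $ne\in\End(J)$ has image contained in the abelian subvariety $\pi^\vee(E)$, so $ne = \pi^\vee\circ g$ for some $g\in\Hom(J,E)$; comparing in $\Hom(J,E)\otimes\Q$ gives $g = (n/m)\pi$. Optimality is what is actually used here: if $\pi = d\pi'$ in $\Hom(J,E)$ with $d > 1$, then $\ker\pi\supset\ker\pi'$ with nontrivial finite quotient $E[d]$, contradicting connectedness of $\ker\pi$. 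Thus $\pi$ is primitive in $\Hom(J,E)$, so $m\mid n$ and $m = n$.
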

\begin{proof}
See the proof of Theorem 3 in \cite{Zagier}. 
\end{proof}

\begin{prg} 
  Recall that the closed immersion $E\inject J$ gives rise to the
  inclusion $\pi^\vee:\Gamma\inject\Lambda$ sending $\rho\mapsto c\lambda_E$,
  and that the projection $\pi:J\to E$ induces a surjective homomorphism
  $\pi:\Lambda\surject\Gamma$.  The endomorphism
  $\pi\circ\pi^\vee: E^\ast\to E_\ast$ corresponds to the endomorphism
  $\pi\circ\pi^\vee: \Gamma\inject\Lambda\surject\Gamma$, so 
  by Lemma~\ref{lemZagier},
  $\pi(c\Lambda_E) = \pi\circ\pi^\vee(\rho) = n\rho$,  and therefore
  \begin{equation} \label{eq:lambda.E.nc.rho}
    \pi(\lambda_E) = \frac nc \rho. 
  \end{equation}
  The idempotent $e_0$ corresponds to the composition
  $\pi^\vee\circ\pi:\Lambda\surject\Gamma\inject\Lambda$.
  We have 
  $\pi^\vee\circ\pi(\lambda_E) = \pi^\vee(\frac nc\rho) = n\lambda_E$,
  so $e_0 = ne$ because $e(\lambda_E) = \lambda_E$.
  Since $\frac 1c \pi^\vee(\Gamma)\subset\Lambda$ but
  $\frac 1{c'} \pi^\vee(\Gamma)\not\subset\Lambda$ for
  $c' > c$, we have $\frac 1c e_0\in\End(\Lambda)$ but 
  $\frac 1{c'} e_0\notin\End(\Lambda)$ for $c' > c$.
  Thus $re = \frac 1c e_0 = \frac nc e$, i.e.\
  \begin{equation} \label{eq:r.n.c}
    c = \frac nr
  \end{equation}
\end{prg}

\begin{prg}
  The pairing $\angles{\cdot,\cdot}$ coincides with the ($H$-polarized
  version of) Grothendieck's monodromy pairing; see~\cite[(14.2.5)]{SGA7} and~\cite[Thm. 2.1]{Coleman}.  
  By~\cite[(11.5)]{SGA7} the cokernel of the map
  $\Lambda\to\Hom(\Lambda,\Z)$ induced by the monodromy pairing
  $\angles{\cdot,\cdot}$ is naturally isomorphic to the component group
  $\Phi_J$.  The analogous statement holds for $E$,
  and we have a commutative diagram
  \begin{equation} \label{eq:comp.gps.monodromy}
    \xymatrix @=.25in{ 
      0 \ar[r] & \Lambda \ar[r]^(.3){\angles{\cdot,\cdot}} \ar[d]_\pi & 
      {\Hom(\Lambda,\Z)} \ar[r] \ar[d]_{\ev_\rho} & 
      {\Phi_J} \ar[r] \ar[d]^{\pi_*} & 0 \\
      0 \ar[r] & \Gamma \ar[r] & 
      {\Z} \ar[r] &
      {\Phi_E} \ar[r] & 0 
    }\end{equation}
  where $\ev_\rho(f) = f(\pi^\vee(\rho))$ as
  in~(\ref{eq:parameterization.unif}).  Since $\pi^\vee(\rho)=c\lambda_E$
  and $\Z\lambda_E$ is a direct summand of $\Lambda$, the cokernel of 
  $\ev_\rho$ is isomorphic to $\Z/c\Z$.  As $\pi:\Lambda\to\Gamma$ is
  surjective, this implies that
  \begin{equation} \label{eq:coker.compgps}
    \coker(\pi_*:\Phi_J\to\Phi_E) \cong \Z/c\Z. 
  \end{equation}
  This is a generalization of Formula 1 in \cite{RibetLetter}.  The following 
  corollary is also observed in \cite[Thm. 2]{TakahashiJP} in the context of Jacobians of Shimura curves. 
\begin{cor}\label{cor:cdivK}
$\#\coker(\pi_\ast)$ divides the order of the group of roots of unity in $K^\times$. 
\end{cor}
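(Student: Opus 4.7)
The corollary is an immediate consequence of what has already been established. By~\eqref{eq:coker.compgps}, $\#\coker(\pi_\ast) = c$, so it suffices to show that $c$ divides the order of the group of roots of unity in $K^\times$. This is precisely the observation made at the end of~(\ref{prg-c}): from~\eqref{eq:cinvGamma} we have $c^{-1}\G \subset K^\times$, and since $c^{-1}\Gamma = \mu_c \times w^\Z$ for any $c$-th root $w$ of $q_E$, this forces $\mu_c \subset K^\times$.

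Thus the plan is: invoke~\eqref{eq:coker.compgps} to identify $\#\coker(\pi_\ast)$ with $c$, then invoke the inclusion $\mu_c \subset K^\times$ derived from~\eqref{eq:cinvGamma}. No further computation is required, and there is no real obstacle since the work has already been done in the preceding paragraphs.
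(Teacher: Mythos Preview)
Your proof is correct and takes essentially the same approach as the paper, which simply states that the result ``follows from (\ref{prg-c}) and (\ref{eq:coker.compgps}).'' You have just spelled out what these two references contribute: \eqref{eq:coker.compgps} gives $\#\coker(\pi_\ast)=c$, and the conclusion of (\ref{prg-c}) is precisely that $\mu_c\subset K^\times$.
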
  
\begin{proof}
Follows from (\ref{prg-c}) and (\ref{eq:coker.compgps}). 
\end{proof}
\end{prg}
  
  \begin{prg}
  The map $\Gamma\to\Z$ is the composition of
  $\Gamma\to K^\times$ with $\ord_K:K^\times\to\Z$; hence $\rho$ maps to 
  $\ord_K(q_E)$.  (This recovers the well-known fact that
  $\#\Phi_E = \ord_K(q_E)$.)  We have
  $\rho = \frac cn\pi(\lambda_E)$ by~\eqref{eq:lambda.E.nc.rho}, so since
  the left square commutes, 
  \[ c\angles{\lambda_E,\lambda_E} = \angles{\lambda_E,\pi^\vee(\rho)}
  = \frac nc\ord_K(q_E), \]
  and therefore,
  \begin{equation} \label{eq:c2.lambdaE.lambdaE}
    c^2\,\angles{\lambda_E,\lambda_E} = n\,\ord_K(q_E).
  \end{equation}
  This is essentially Formula 3 in \cite{RibetLetter}.
\end{prg}

\begin{prg} Let
\begin{align*}
m: &=\min\{\langle \la, \la_E\rangle>0\ |\ \la\in \La\};\\
\la_E^\perp: &=\{\la\in \La\ |\ \langle \la, \la_E\rangle=0\}. 
\end{align*}
The image of $\ev_\rho\circ\angles{\cdot,\cdot}$
in~\eqref{eq:comp.gps.monodromy} is exactly 
$\min\{\angles{\lambda,c\lambda_E}>0~|~\lambda\in\Lambda\} = c\cdot m$; as
$\pi:\Lambda\to\Gamma$ is surjective and the image of $\Gamma$ in $\Z$ is
generated by $\ord_K(q_E)$, this implies
\begin{equation} \label{eq:whats.m}
  c\cdot m = \ord_K(q_E).
\end{equation}

\end{prg}

\begin{thm} \label{thm:when.surj}
  The following are equivalent:
  \begin{enumerate}
  \item The functorially induced map on component groups
    $\Phi_J\to\Phi_E$ is surjective.
  \item $e_0$ is primitive in $\End(\Lambda)$.
  \item $c=1$.
  \item $n=r$.
  \item $\angles{\lambda_E,\lambda_E} = n\,\ord_K(q_E)$.
  \item $m = \ord_K(q_E)$.
\item $n=[\La: \la_E^\perp\oplus \Z\la_E]$. 
  \end{enumerate}
\end{thm}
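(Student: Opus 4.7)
The plan is to take condition~(3), $c=1$, as a pivot and show that every other condition is equivalent to it. Most of the required equivalences will be immediate consequences of formulas already proved in this section. Specifically, \eqref{eq:coker.compgps} identifies $\coker(\pi_*)$ with $\Z/c\Z$, which yields (1)~$\iff$~(3); \eqref{eq:r.n.c} states $c=n/r$, giving (3)~$\iff$~(4); and \eqref{eq:whats.m} states $cm = \ord_K(q_E)$, giving (3)~$\iff$~(6). For (3)~$\iff$~(5) I will rearrange \eqref{eq:c2.lambdaE.lambdaE} to $\langle\lambda_E,\lambda_E\rangle = n\,\ord_K(q_E)/c^2$; condition~(5) is then equivalent to $c^2=1$, and since $c$ is a positive integer and $\langle\lambda_E,\lambda_E\rangle>0$ by positive-definiteness, to $c=1$.

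For (2)~$\iff$~(4) I will use that $e_0 = ne$ and that $r$ is by definition the smallest positive integer with $re\in\End(\Lambda)$. Any factorization $e_0 = dT$ with $T\in\End(\Lambda)$ and $d$ a positive integer forces $T=(n/d)e$ in $\End(\Lambda)\otimes\Q$, so $T\in\End(\Lambda)$ precisely when $r\mid n/d$, equivalently $d\mid n/r$. Hence $e_0$ admits a nontrivial factorization of this form if and only if $n/r>1$, and it is primitive in $\End(\Lambda)$ if and only if $n=r$.

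Finally, for (7)~$\iff$~(3) I will compute the index directly. The linear functional $\lambda\mapsto\langle\lambda,\lambda_E\rangle$ on $\Lambda$ has image $m\Z$ by definition of $m$, and kernel $\lambda_E^\perp$, so dividing by $m$ yields a surjection $\psi:\Lambda\to\Z$ with kernel $\lambda_E^\perp$. Since $\psi(\lambda_E)=\langle\lambda_E,\lambda_E\rangle/m$ and $\lambda_E^\perp\cap\Z\lambda_E=0$ by positive-definiteness, one obtains
\[
[\Lambda:\lambda_E^\perp\oplus\Z\lambda_E]
\;=\; \langle\lambda_E,\lambda_E\rangle/m
\;=\; n/c,
\]
where the last equality uses $\langle\lambda_E,\lambda_E\rangle = n\,\ord_K(q_E)/c^2$ from \eqref{eq:c2.lambdaE.lambdaE} and $m=\ord_K(q_E)/c$ from \eqref{eq:whats.m}. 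Therefore (7) is equivalent to $n/c=n$, i.e.\ to $c=1$. The whole argument is essentially bookkeeping once the formulas from the earlier paragraphs are in hand; the only step needing a small computation rather than direct substitution is the index formula in~(7).
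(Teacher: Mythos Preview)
Your proof is correct and follows essentially the same approach as the paper: both pivot on $c=1$ and invoke \eqref{eq:coker.compgps}, \eqref{eq:r.n.c}, \eqref{eq:c2.lambdaE.lambdaE}, \eqref{eq:whats.m}, and the relation $e_0=ne$ for the equivalences. The one minor difference is in (7): the paper simply asserts that $r=[\Lambda:\lambda_E^\perp\oplus\Z\lambda_E]$ and deduces (4)~$\iff$~(7), whereas you compute the index explicitly as $\langle\lambda_E,\lambda_E\rangle/m=n/c$ and deduce (3)~$\iff$~(7); since $n/c=r$ by \eqref{eq:r.n.c}, your computation in fact supplies the detail the paper omits.
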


\begin{proof}
We have (1) $\iff$ (3) by~\eqref{eq:coker.compgps}, 
(3) $\iff$ (4) by~\eqref{eq:r.n.c}, and (4) $\iff$ (2) since
$e_0 = ne$.  Conditions~(5) and~(6) are equivalent to~(3)
by~\eqref{eq:c2.lambdaE.lambdaE} and~\eqref{eq:whats.m}, respectively.
It is easy to see that $r=[\La: \la_E^\perp\oplus \Z\la_E]$, hence (4) $\iff$ (7). 
\end{proof}

\begin{rem}\label{remModDeg} Assume $X$ has a $K$-rational point. Fix such a point $P_0$, and 
let $\theta: X\hookrightarrow J$ be the Abel-Jacobi map which sends $P_0$ 
to the origin of $J$. Since $\theta(X)$ generates $J$, the composition $\pi\circ \theta$ 
gives a non-constant morphism $w: X\to E $. It is easy to show that 
the degree $\deg(w)$ of $w$ is $n$. The index $[\La: \la_E^\perp\oplus \Z\la_E]$ is the ``congruence number'' of $\la_E$ 
with respect to the monodromy pairing, i.e., is the largest integer $R_E$ such that there is an element 
in $\la_E^\perp$ congruent to $\la_E$ modulo $R_E$. 
Hence Theorem \ref{thm:when.surj} 
implies that $R_E$ divides $\deg(w)$  and the ratio is $c$. As we will show in Section \ref{sec2}, 
$n/R_E=c$ can be strictly larger than $1$. 
It is interesting to compare this fact with 
the relation between the degree of modular parametrization  
of an elliptic curve over $\Q$ and the congruence number of the corresponding newform. 

Let $E$ be an elliptic curve over $\Q$. One may view $E$ as an abelian variety 
quotient over $\Q$ of the modular Jacobian $J_0(N)$, where $N$ is the conductor of $E$. 
Assume $E$ is an optimal quotient of $J_0(N)$. The modular degree $n_E$ 
is the degree of the composite map $X_0(N)\to J_0(N)\to E$, where the second map is an optimal quotient, 
and the first map is the Abel-Jacobi map $X_0(N)\to J_0(N)$ sending the cusp $[\infty]$ to the origin. 
Let $S_2(N,\Z)$ be the space of weigh-$2$ cusp forms on $\G_0(N)$ with integer Fourier 
coefficients. Let $f_E\in S_2(N,\Z)$ be the newform attached to $E$. Let 
$R_E':=[S_2(N,\Z):f_E^\perp\oplus \Z f_E]$, where $f_E^\perp$ is the orthogonal 
complement of $f_E$ in $S_2(N,\Z)$ with respect to the Petersson inner product. 
In \cite{ARS}, the authors show that $n_E$ divides $R_E'$, but the ratio $R_E'/n_E$ 
can be strictly larger than $1$. 
\end{rem}

We use Theorem \ref{thm:when.surj} to give two conditions under which
$\Phi_J\to\Phi_E$ is surjective. 

\begin{lem}\label{lemGek} 
Let $\T$ be a commutative subring of $\End(J)$ 
with the same identity element and such that $e\in \T\otimes\Q$. 
Suppose there is a bilinear $\T$-equivariant pairing 
$$
(\cdot, \cdot):\ \T\times \La\to \Z 
$$ 
which is perfect if we consider $\T$ and $\La$ as free $\Z$-modules. Then
the equivalent conditions of Theorem~\ref{thm:when.surj} are satisfied.
\end{lem}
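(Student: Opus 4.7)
The plan is to verify condition~(4) of Theorem~\ref{thm:when.surj}, i.e., that the denominator $n$ of $e$ in $\End(J)$ equals the denominator $r$ of $e$ in $\End(\La)$. Writing $n_\T$ for the denominator of $e$ in $\T$, the inclusions $\T\subset\End(J)\subset\End(\La)$ yield for free the divisibilities $r\mid n\mid n_\T$, so it is enough to show $n_\T\mid r$: every positive integer $\alpha$ with $\alpha e\in\End(\La)$ must satisfy $\alpha e\in\T$.

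To that end, I would first use the perfect $\T$-equivariant pairing to set up a canonical $\T$-linear isomorphism
\[
\La\xrightarrow{\sim}\Hom_\Z(\T,\Z),\qquad \la\longmapsto\phi_\la,\quad \phi_\la(s):=(s,\la),
\]
where $\T$ acts on $\Hom_\Z(\T,\Z)$ by $(t\psi)(s)=\psi(ts)$. The equivariance $(ts,\la)=(s,t\la)$, together with the commutativity of $\T$, gives $\phi_{t\la}=t\cdot\phi_\la$, and perfectness shows the map is a bijection.

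Next I would translate the hypothesis $\alpha e\in\End(\La)$ across this isomorphism. Since the action of $\alpha e\in\T\otimes\Q$ on $\Hom_\Z(\T,\Z)$ sends $\phi$ to $s\mapsto\phi(\alpha e\cdot s)$, the hypothesis amounts to $\phi(\alpha e\cdot s)\in\Z$ for every $\phi\in\Hom_\Z(\T,\Z)$ and every $s\in\T$. Because $\T$ is a free $\Z$-module, the standard double-duality characterization
\[
\T=\{\,x\in\T\otimes\Q~:~\phi(x)\in\Z\text{ for all }\phi\in\Hom_\Z(\T,\Z)\,\}
\]
then forces $\alpha e\cdot s\in\T$ for every $s\in\T$. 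Taking $s=1$, which lies in $\T$ by the assumption that $\T$ shares the identity of $\End(J)$, yields $\alpha e\in\T$, as required.

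Combining the steps gives $n_\T\mid r$, whence $n=r=n_\T$, and condition~(4) of Theorem~\ref{thm:when.surj} is satisfied. The only real subtlety is making sure the two $\T$-actions on the modules in play — on $\La$ via endomorphisms of $J$, and on $\Hom_\Z(\T,\Z)$ via precomposition with multiplication in $\T$ — are correctly intertwined by the pairing; once that bookkeeping is checked, the argument is a formal duality calculation and no deeper input is required.
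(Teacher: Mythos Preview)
Your proof is correct and follows essentially the same approach as the paper: both establish the chain $r\mid n\mid n_\T$ and then use $\T$-equivariance of the pairing together with perfectness to force $n_\T=r$. The only cosmetic difference is that the paper invokes perfectness via a determinant argument (the row of the pairing matrix corresponding to the primitive element $n_\T e$ is divisible by $n_\T/r$), whereas you invoke it via the double-duality identification $\T=\{x\in\T\otimes\Q:\phi(x)\in\Z\text{ for all }\phi\in\Hom_\Z(\T,\Z)\}$; these are equivalent formulations of the same fact.
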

\begin{proof} 
Let $s$ be the denominator of $e$ in $\T$, i.e., the smallest positive integer such that 
$se\in \T$. Note that $r$ divides $n$ and $n$ divides $s$, since $\T\subseteq \End(J)\subseteq \End(\La)$. 
Let $\la\in \La$ be arbitrary, and denote $\la'=(re)\la\in \La$. 
Because $se\in \T$ is primitive, we can take it as part of a $\Z$-basis of $\T$. Now 
$$
(se, \la)=(1, (se)\la)=(1, \frac{s}{r} \la')=\frac{s}{r}(1, \la')\in \frac{s}{r}\Z.
$$
Hence $s/r$ divides the determinant of $(\cdot, \cdot)$ with respect to some  
$\Z$-bases of $\T$ and $\La$. The perfectness of the pairing is equivalent to this determinant 
being $\pm 1$. Therefore, $s=r$, which implies $r=n$. 
\end{proof}

\begin{prg} \label{prg:setup.lemCII}
We keep the notation of Lemma~\ref{lemGek}. 
As is easy to check, the assumption $e\in \T\otimes \Q$ implies that $\G'$ is $\T$-invariant, that is, 
for any $T\in \T$ we have $T\la_E=a(T)\cdot \la_E$ for some $a(T)\in \Z$. It is clear that the map $T\mapsto a(T)$ 
gives a homomorphism $\T\to \Z$. Denote the kernel of this homomorphism by $I_E$. 
Define 
$$
I_E\La=\{T\la\ |\ T\in I_E, \la\in \La\}=\{T\la-a(T)\la\ |\ T\in \T, \la\in \La\}. 
$$ 
Assume $a(T^\dag)=a(T)$ for all $T\in \T$. Since 
$$
\langle T\la-a(T)\la, \la_E\rangle=\langle \la, T^\dag\la_E\rangle -a(T)\langle \la, \la_E\rangle=0,
$$
we have an inclusion $I_E\La \subseteq \la_E^\perp$. Note that the index 
$[\lambda_E^\perp : I_E\Lambda]$ is finite since $1-e$ is the projection 
onto $\la_E^\perp\otimes \Q$. 
\end{prg}

\begin{lem}\label{lemCII} 
  The index $[\lambda_E^\perp : I_E\Lambda]$ is divisible by $c$.  In particular,
  if $I_E\La=\la_E^\perp$ then the equivalent conditions of
  Theorem~\ref{thm:when.surj} are satisfied. 
\end{lem}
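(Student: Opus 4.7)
The plan is to construct a surjection $\la_E^\perp/I_E\La \twoheadrightarrow \Z/c\Z$; this immediately gives $c\mid [\la_E^\perp : I_E\La]$, and in the boundary case $I_E\La = \la_E^\perp$ it forces $c=1$, which is condition~(3) of Theorem~\ref{thm:when.surj}.

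The surjection will come from the evaluation map $\phi : \La \to c^{-1}\G/\G$, $\phi(\la) = [\la,\la_E]\bmod\G$.  By~\eqref{eq:cinvGamma} the assignment $\la\mapsto[\la,\la_E]$ surjects onto $c^{-1}\G$, so $\phi$ is surjective.  The decomposition $c^{-1}\G = \mu_c\cdot w^\Z$ together with $\G = w^{c\Z}$ yields a natural short exact sequence $0\to\mu_c\to c^{-1}\G/\G\to\Z/c\Z\to 0$, whose right-hand map sends $\zeta w^k\bmod\G$ to $k\bmod c$.

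First I would check that $\phi$ vanishes on $I_E\La$.  For $T\in\T$ and $\la\in\La$, the multiplicative form of Proposition~\ref{prop3.3}, together with $T^\dag\la_E = a(T^\dag)\la_E = a(T)\la_E$ (using the hypothesis $a(T^\dag)=a(T)$), gives $[T\la,\la_E] = [\la,T^\dag\la_E] = [\la,\la_E]^{a(T)}$, and hence $[T\la - a(T)\la,\la_E] = 1$.  Next, for $\la\in\la_E^\perp$ we have $\ord_K[\la,\la_E] = \langle\la,\la_E\rangle = 0$, so $\phi(\la_E^\perp)\subseteq\mu_c$.  Combining these observations with $\pi:\La\to\G$ produces a commutative diagram with exact rows
\[
\xymatrix @C=.25in @R=.22in {
0 \ar[r] & \la_E^\perp \ar[r] \ar[d] & \La \ar[r]^{\pi} \ar[d]^{\phi} & \G \ar[r] \ar[d] & 0 \\
0 \ar[r] & \mu_c \ar[r] & c^{-1}\G/\G \ar[r] & \Z/c\Z \ar[r] & 0
}
\]
in which the right vertical is the natural quotient $\G = \Z\rho\to\Z/c\Z$.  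Since the middle and right verticals are surjective, the snake lemma forces the left vertical $\phi|_{\la_E^\perp}:\la_E^\perp\to\mu_c$ to be surjective.  Together with the inclusion $I_E\La\subseteq\ker\phi$, this yields the desired surjection $\la_E^\perp/I_E\La\twoheadrightarrow\mu_c\cong\Z/c\Z$.

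The main obstacle is the surjectivity of $\phi|_{\la_E^\perp}$; once the diagram is in place, the snake lemma reduces this to the surjectivity of $\phi$, which is the content of~\eqref{eq:cinvGamma}.  The remaining steps are essentially bookkeeping with the pairing $[\cdot,\cdot]$ and the hypothesis $a(T^\dag)=a(T)$, the latter being what makes $\phi$ descend through $I_E\La$.
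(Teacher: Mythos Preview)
Your computation that $[T\lambda - a(T)\lambda,\lambda_E]=1$ is correct and identical to the paper's.  The gap is in the snake-lemma step: it is \emph{not} true in general that surjectivity of the middle and right verticals forces surjectivity of the left.  A counter-example is the diagram with top row $0\to 0\to\Z\xrightarrow{\id}\Z\to 0$, bottom row $0\to\Z/2\Z\xrightarrow{\id}\Z/2\Z\to 0\to 0$, and vertical maps $0$, reduction mod $2$, and $0$.  What the snake lemma actually gives you is that the connecting map $\delta:\ker h\to\coker f$ is surjective; this only says $\mu_c/\phi(\la_E^\perp)$ is a quotient of $c\G\cong\Z$, not that it vanishes.

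The paper proceeds differently at this point.  It works with the unreduced map $[\cdot,\la_E]:\La/I_E\La\twoheadrightarrow c^{-1}\G\cong\mu_c\times w^\Z$ and observes that $\La/I_E\La\cong(\la_E^\perp/I_E\La)\times\Z$ (the sequence splits since $\La/\la_E^\perp\cong\Z$ is free).  The torsion part $\la_E^\perp/I_E\La$ must then surject onto $\mu_c$: otherwise, passing to the quotient by its image, one would obtain a surjection from $\Z$ onto a group of the form $(\text{nontrivial finite})\times\Z$, which is impossible.  Your setup can be repaired by the same observation: the surjection $\Z\cong\La/\la_E^\perp\twoheadrightarrow(\mu_c/\phi(\la_E^\perp))\times\Z/c\Z$ forces the target to be cyclic, and since $|\mu_c/\phi(\la_E^\perp)|$ divides $c$, this forces $\phi(\la_E^\perp)=\mu_c$.
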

\begin{proof}
For $T\in\T$ and $\lambda\in\Lambda$ we have
\[\begin{split}
[T\lambda - a(T)\lambda,\,\lambda_E] 
&= [T\lambda,\,\lambda_E]\,[\lambda,\,\lambda_E]^{-a(T)} \\
&= [\lambda,\,T^\dag\lambda_E]\,[\lambda,\,\lambda_E]^{-a(T)}
= [\lambda,\,a(T^\dag)\lambda_E]\,[\lambda,\,\lambda_E]^{-a(T)} = 1. 
\end{split}\]
Hence by~\eqref{eq:cinvGamma} we have a surjection
$[\cdot,\lambda_E]:\Lambda/I_E\Lambda \surject c^{-1}\Gamma\cong \mu_c\times w^\Z$.
Consider the short exact sequence
\begin{equation} \label{eq:lambda.perp.seq}
  0 \To \lambda_E^\perp/I_E\Lambda \To \Lambda/I_E\Lambda \To
  \Lambda/\lambda_E^\perp \To 0.
\end{equation}
Since $\Lambda/\lambda_E^\perp\cong\Z$, this identifies 
$\lambda_E^\perp/I_E\Lambda$ with the torsion part of
$\Lambda/I_E\Lambda$.  Since $\Lambda/I_E\Lambda$ surjects onto
$\mu_c\times w^\Z$, no non-torsion element of $\Lambda/I_E\Lambda$ maps
into $\mu_c$, so we must have $\lambda_E^\perp/I_E\Lambda\surject\mu_c$. 
\end{proof}


\section{Modular Jacobians}\label{sMJ} In this section we discuss Question \ref{qR} in the context 
of Jacobians of certain modular curves. 

\begin{prg}\label{prgJ0p}
Consider the modular curve $X_0(p)$ defined over $\Q$ classifying elliptic curves with cyclic subgroups of order $p$, where 
$p$ is prime. Assume the genus of $X_0(p)$ is not $0$; in particular, $p$ is odd. 
By a well-known result of Deligne and Rapoport, the  Jacobian $J_0(p)$ of $X_0(p)$ has good reduction over 
$\Q_\ell$ for any prime $\ell\neq p$, and has toric reduction over $\Q_p$;  cf. \cite[p. 288]{NM}. 

\begin{thm}\label{thmJ0p} Let $\pi: J_0(p)\to E$ be an optimal quotient defined over $\Q_p$, where $E$ is an elliptic curve.  
The induced map on component groups $\pi_\ast: \Phi_{J_0(p)}\to \Phi_E$ of the N\'eron models over $\Z_p$ 
is surjective. 
\end{thm}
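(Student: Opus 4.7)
The plan is to deduce the result from Lemma \ref{lemCII} applied to the Hecke algebra $\T\subset \End_{\Q}(J_0(p))$, i.e., the commutative subring generated by the operators $T_\ell$ for $\ell\neq p$ together with $U_p$. First I would verify the hypotheses of the lemma. Since $E$ is an optimal elliptic quotient of $J_0(p)$ over $\Q$, it corresponds to a normalized newform of level $p$ with integer Fourier coefficients, and the associated idempotent $e\in \End^0(J_0(p))$ lies in $\T\otimes \Q$. Moreover, each element of $\T$ is fixed by the Rosati involution coming from the canonical principal polarization of $J_0(p)$---this is the classical self-adjointness of Hecke operators---so $a(T^\dag)=a(T)$ for every $T\in \T$, as required in~(\ref{prg:setup.lemCII}).

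The crux is then to establish the sufficient condition $I_E\La=\la_E^\perp$ appearing at the end of Lemma \ref{lemCII}. For this I would invoke Mazur's theorem from \cite{MazurEisen}: for prime level $p$, the character group $\La$ of $J_0(p)_{\Q_p}$ is free of rank one as a $\T$-module (equivalently, $\T$ is Gorenstein, so the $\T$-duality identifies $\La$ with $\T$ itself). Granting this, fix a generator $v\in \La$ so that $\La=\T v$, whence
$$\La/I_E\La \;\cong\; \T/I_E.$$
The homomorphism $\T\to \Z$, $T\mapsto a(T)$, is surjective (the identity maps to $1$) with kernel $I_E$, so $\T/I_E\cong \Z$ is torsion-free. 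From the short exact sequence appearing in the proof of Lemma \ref{lemCII}, the group $\la_E^\perp/I_E\La$ is the torsion subgroup of $\La/I_E\La$ and therefore vanishes, giving $I_E\La=\la_E^\perp$. Lemma \ref{lemCII} then forces $c=1$, and the equivalence (1)~$\iff$~(3) of Theorem \ref{thm:when.surj} delivers the surjectivity of $\pi_\ast$.

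The principal obstacle is the appeal to Mazur's freeness theorem, i.e., to the Gorenstein property of the prime-level Hecke algebra. This is the substantive input that is peculiar to prime level $p$, and is precisely the feature that prevents the same argument from going through for composite level on the classical modular side, or for Shimura curves, where the analogous $\T$-module structure of $\La$ is subtler (and in some cases not known).
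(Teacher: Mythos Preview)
Your strategy via Lemma~\ref{lemCII} matches the paper's, but the crucial step---that $\Lambda$ is free of rank one over $\T$---is not a theorem in \cite{MazurEisen}, and the parenthetical ``equivalently, $\T$ is Gorenstein'' is not an equivalence one can simply invoke. Mazur establishes that $\T_\fm$ is Gorenstein for \emph{Eisenstein} maximal ideals $\fm$; at non-Eisenstein $\fm$ of residue characteristic $2$ the Gorenstein property is known to fail at some prime levels (Kilford, for $p=431,503$), and the local freeness of $\Lambda_\fm$ over $\T_\fm$ is tied up with exactly these questions. So the deduction $\Lambda/I_E\Lambda\cong\T/I_E\cong\Z$ is unavailable in general, and your argument does not close as written.

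The paper's proof repairs this by never needing global freeness. First Corollary~\ref{cor:cdivK} gives $p\nmid c$. Then for each prime $\ell\mid c$ one shows that $\fm_\ell=(I_E,\ell)$ is \emph{Eisenstein}: since $\ell\mid\#\Phi_E$, the scheme $\sE[\ell]$ is finite \'etale over $\Z_p$, hence the residual representation $\rho_{\fm_\ell}\cong E[\ell]$ is finite at $p$, and \cite[Prop.~2.2]{RibetTorsion} then forces $\fm_\ell$ to be Eisenstein. Now \cite[Thm.~2.3]{RibetTorsion} (which applies when $\ell\neq 2$ \emph{or} $\fm_\ell$ is Eisenstein) gives $\Lambda/\fm_\ell\Lambda\cong\F_\ell$, hence $(\lambda_E^\perp/I_E\Lambda)\otimes\F_\ell=0$, and Lemma~\ref{lemCII} yields $\ell\nmid c$. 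The Galois-representation step confining the obstruction to the Eisenstein locus---where Mazur's result does apply---is the idea missing from your proposal.

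A minor aside: the Rosati involution on $\T$ is $T\mapsto w_pTw_p$ (\cite[p.~444]{RibetLL}), not literally the identity; at prime level this does reduce to $T^\dag=T$ since $T_\ell$ commutes with $w_p$ and $U_p=-w_p$, but the paper uses only $w_p\lambda_E=\pm\lambda_E$ to verify $a(T^\dag)=a(T)$.
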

\begin{proof}
This was proven by Mestre and Oesterl\'e; see \cite[Cor. 3]{MO}. A more general result was 
proven by Emerton in \cite{Emerton}. Both proofs rely 
on Ribet's level-lowering theorem \cite{RibetLL}, and the deepest results in \cite{MazurEisen}. 
We give a different proof, which uses Lemma~\ref{lemCII}. 

Since $J_0(p)$ is defined over $\Q$ and has semi-stable reduction, all its endomorphisms 
are defined over $\Q$; see \cite[Thm. 1.1]{RibetEnd}. This implies that $\pi$ and $E$ can be defined over $\Q$. 
Let $\T$ be the subring of $\End(J_0(p))$ generated 
by the Hecke operators $T_n$, $n\geq 1$ (see \cite[$\S$3]{RibetLL} for the definition).
If $f_E(z)=\sum_{n\geq 1}a_n e^{2\pi i z n}$ is the newform attached to
$E$ then one checks that $T_n\lambda_E = a_n\lambda_E$, which implies
$e\in\T\otimes\Q$.  
By \cite[p. 444]{RibetLL}, $T^\dag=w_p T w_p$ for $T\in\T$, where $w_p$ 
is the Atkin-Lehner involution of $J$. Since $w_p\la_E=\pm \la_E$, the condition $a(T)=a(T^\dag)$ 
of~(\ref{prg:setup.lemCII}) is satisfied.  Let $I_E$ be the kernel of the
map $\T\to\Z, T_n\mapsto a_n$.

The Jacobian $J_0(p)$ acquires split toric reduction over the 
unramified quadratic extension of $\Q_p$. Since $p$ is odd, Corollary~\ref{cor:cdivK} and (\ref{UnRam}) 
imply that $p$ does not divide $c = \#\coker(\Phi_{J_0(p)}\to\Phi_E)$. 
By Lemma~\ref{lemCII}, it is enough to show that for all $\ell\neq p$ 
such that $\Phi_E[\ell]\neq 0$ we
have $(\lambda_E^\perp/I_E\lambda)\tensor\F_\ell = 0$.  From the
sequence~\eqref{eq:lambda.perp.seq} we see that 
$\Lambda/I_E\Lambda \cong \Z\times(\lambda_E^\perp/I_E\lambda)$
as abelian groups, so it is enough to prove that 
$(\Lambda/I_E\Lambda)\tensor\F_\ell\cong\F_\ell$.  If 
$\fm_\ell = (I_E,\ell)\lhd\T$ then 
$(\Lambda/I_E\Lambda)\tensor\F_\ell = \Lambda/\fm_\ell\Lambda$.  When 
$\ell\neq 2$ or $\fm_\ell$ is Eisenstein, it is a consequence
of~\cite[Theorem~2.3]{RibetTorsion} that $\Lambda/\fm_\ell\Lambda\cong\F_\ell$. 

We claim that $\fm_\ell$ is Eisenstein when $\Phi_E[\ell]\neq 0$.
Considering the $\ell$-torsion subgroup $E[\ell]$ of $E$ as a $\Gal(\overline{\Q}/\Q)$-module, 
we obtain a representation $\rho:\Gal(\overline{\Q}/\Q)\to \GL_2(\F_\ell)$. 
This representation is isomorphic to the residual representation
$\rho_{\fm_\ell}$ attached to $\fm_\ell$; see \cite[$\S$5]{RibetLL}  
for the construction and properties of $\rho_{\fm_\ell}$. 
If $\sE$ is the N\'eron model of $E$ then since $\Phi_E[\ell]\neq 0$,
we have that $\sE[\ell]$ is a finite \'etale
group-scheme over $\Z_p$ which extends $E[\ell]$.
Therefore the Galois representation $\rho\cong\rho_{\fm_\ell}$ is finite,
so $\fm_\ell$ is Eisenstein by Proposition~2.2 in \cite{RibetTorsion}. 
\end{proof}

\end{prg}

\begin{prg}\label{prgBD} Let $D>1$ be a square-free integer divisible by an even number of primes, 
and $M\geq 1$ be a square-free integer coprime to $D$. Let $\G_0^D(M)$ be the group 
of norm-$1$ units in an Eichler order of level $M$ in the indefinite quaternion 
algebra $B$ over $\Q$ of discriminant $D$. Since $B$ is indefinite, by fixing an isomorphism 
$B\otimes\R\cong \M_2(\R)$, we can regard $\G_0^D(M)$ as a discrete subgroup of $\SL_2(\R)$. 
Let $X^D_0(M)=\G^D_0(M)\bs \cH$ be the 
associated Shimura curve, where $\cH=\{z\in \C\ |\ \im(z)>0\}$. This is a 
smooth projective curve, which has a canonical model over $\Q$. It is a moduli space of abelian surfaces 
equipped with an action of $B$ and $\G_0(M)$-level structure. 

The Jacobian $J^D_0(M)$ of $X^D_0(M)$ has toric reduction over $\Q_p$ 
if $p$ divides $D$; this follows from the work of Cherednik and 
Drinfeld (cf. \cite{BC}). Assume $\pi:J^D_0(M)\to E$ is an optimal quotient defined over $\Q$, where $E$ 
is an elliptic curve. 
Fix a prime $p$ dividing $D$, and let $\pi_\ast$ be the induced map on component groups of the corresponding N\'eron models over $\Z_p$. 
In the proof of Proposition 4.4 and Corollary 4.5 in \cite{BD}, Bertolini and Darmon 
implicitly assume that $c$ in the diagram (\ref{eq:ev.E}) with $J=J^D_0(M)$ is $1$. 
By Theorem \ref{thm:when.surj} this assumption is equivalent to $\pi_\ast$ being surjective. 
On the other hand, Question \ref{qR} in general has negative answer, so it is not clear 
whether the answer is always positive for the Jacobians of Shimura curves.  
In the positive direction, 
Takahashi proved that if the $\Gal(\overline{\Q}/\Q)$-module $E[\ell]$ is irreducible, then 
$\ell$ does not divide the order of the cokernel of $\pi_\ast$; see \cite[Thm. 1]{TakahashiJP}. 
The proof relies on the comparison of the degrees of different 
modular parametrizations of $E$ by both modular and Shimura curves. 
\end{prg}

\begin{rem} \label{rem:computations}
  Theorem~\ref{thm:when.surj} suggests a computational approach to finding
  an example of an optimal quotient $E$ of $J^D_0(M)$ such that
  the homomorphism $\pi_\ast$ of component groups is not surjective.  The
  computer algebra package Magma has an implementation of Brandt modules,
  which allows one to do calculations with the lattice $\Lambda$
  uniformizing the analytification of $J^D_0(M)$.  In particular, one can
  efficiently calculate the idempotent $e$.  The surjectivity question
  then reduces to whether or not $re$, as an endomorphism of
  $\Hom(\Lambda,K^\times)$, takes $\Lambda$ to itself.  This 
  calculation can in theory be carried out using
  $p$-adic $\Theta$-functions.
\end{rem}

\begin{prg}\label{prgDMC} 
Let $A=\F[T]$ be the ring of polynomials with coefficients in a finite field $\F$, 
and $F=\F(T)$ be the field of fractions of $A$. 
Let $K=\F(\!(1/T)\!)$ be the completion of $F$ at the place $1/T$, and $R$ the ring of integers of $K$. 
Let $\fn\lhd A$ be an ideal and 
$$
\G_0(\fn)=\left\{\begin{pmatrix} a & b \\ c & d\end{pmatrix}\in \GL_2(A)\ \big|\ c\in \fn\right\}. 
$$
The group $\G_0(\fn)$ acts discontinuously on the Drinfeld half plane $\Omega:=\C_K-K$, and 
the quotient $\G_0(\fn)\bs\Omega$ is the analytification of 
the Drinfeld modular curve $Y_0(\fn)$, which is a smooth affine algebraic curve defined over $K$. 
The $\C_K$-valued points of $Y_0(\fn)$ are in bijection with 
rank-$2$ Drinfeld $A$-modules over $\C_K$ with certain level structures. 
Let $J_0(\fn)$ be the Jacobian of the smooth projective 
curve containing $Y_0(\fn)$ as a Zariski dense subset. The Jacobian $J_0(\fn)$ 
has split toric reduction over $K$; cf. \cite[Thm. 2.10]{Analytic}. 

\begin{thm}\label{thmJDM} Assume $\pi: J_0(\fn)\to E$ is an optimal quotient defined over $K$, where $E$ is an elliptic curve. 
The induced map on component groups $\pi_\ast: \Phi_{J_0(\fn)}\to \Phi_E$ of the N\'eron models over $R$ 
is surjective. 
\end{thm}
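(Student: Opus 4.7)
The plan is to apply Lemma \ref{lemCII} in direct parallel with the proof of Theorem \ref{thmJ0p}. Take $\T\subset \End_F(J_0(\fn))$ to be the commutative subring generated by the Hecke correspondences $T_\fp$ (for $\fp\nmid\fn$) and $U_\fp$ (for $\fp\mid\fn$) on $X_0(\fn)$. The Gekeler-Reversat uniformization identifies the lattice $\Lambda$ of $J_0(\fn)$ with a full-rank sublattice of the cuspidal $\Z$-valued harmonic cochains for $\G_0(\fn)$ on the Bruhat-Tits tree, the monodromy pairing coincides with the natural Petersson-type pairing, and $\T$ acts $\Z$-linearly. The elliptic optimal quotient $E$ corresponds to a $\T$-eigenform (Drinfeld newform) $f_E$ with $\Z$-valued eigenvalues $\{a_\fp\}$, and the identification $\lambda_E\leftrightarrow f_E$ gives $T\lambda_E=a(T)\lambda_E$ for every $T\in\T$; in particular $e\in\T\otimes\Q$. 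The Atkin-Lehner involution $w_\fn$ satisfies $T^\dag=w_\fn T w_\fn$ and $w_\fn\lambda_E=\pm\lambda_E$, so $a(T^\dag)=a(T)$, which verifies the hypothesis of (\ref{prg:setup.lemCII}).

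Corollary \ref{cor:cdivK} shows that $c$ divides the order $q-1$ of the group of roots of unity in $K^\times$, so $c$ is automatically coprime to the residue characteristic $p$. Since $\coker(\pi_\ast)\cong\Z/c\Z$ embeds into $\Phi_E$, every prime $\ell$ dividing $c$ also satisfies $\Phi_E[\ell]\neq 0$. By Lemma \ref{lemCII} it therefore suffices to show, for every prime $\ell\neq p$ with $\Phi_E[\ell]\neq 0$, that $(\lambda_E^\perp/I_E\Lambda)\otimes\F_\ell=0$. Using the short exact sequence (\ref{eq:lambda.perp.seq}), this is equivalent to $\Lambda/\fm_\ell\Lambda\cong\F_\ell$, where $\fm_\ell=(I_E,\ell)\lhd\T$.

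The hard part is this multiplicity-one statement. The first step is to show that $\Phi_E[\ell]\neq 0$ forces $\fm_\ell$ to be an Eisenstein maximal ideal. Because $\ell\mid\ord_K(q_E)$ and $\ell\neq p$, both $\mu_\ell$ and an $\ell$-th root of $q_E$ lie in the maximal unramified extension of $K$, so the Tate parametrization shows that $E[\ell]$ is unramified at $\winf$; hence the residual representation $\rho_{\fm_\ell}\cong E[\ell]$ attached to $\fm_\ell$ is unramified there. The function-field analog of Mazur's ``finite at $p$ implies reducible'' argument, developed for $J_0(\fn)$ in the Drinfeld-modular Eisenstein-ideal literature, then forces $\rho_{\fm_\ell}$ to be reducible, i.e.\ $\fm_\ell$ to be Eisenstein. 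The second step is the corresponding Drinfeld-modular multiplicity-one theorem at Eisenstein primes, which gives $\dim_{\F_\ell}\Lambda/\fm_\ell\Lambda=1$ and completes the argument. The main technical obstacle is ensuring that both the ``finite implies Eisenstein'' principle and the multiplicity-one statement at Eisenstein primes are available at arbitrary level $\fn$, rather than only in the much-studied prime-level case; a newform decomposition of $\Lambda\otimes\Q$ under $\T$ should allow one to reduce the general case to the new part, where the relevant structure theorems are recorded.
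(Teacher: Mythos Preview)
Your proposal follows the template of the paper's proof of Theorem~\ref{thmJ0p}, but the paper's own proof of Theorem~\ref{thmJDM} takes a different and much shorter route via Lemma~\ref{lemGek} rather than Lemma~\ref{lemCII}. The key input is Gekeler's bilinear $\T$-equivariant pairing
\[
\T \times H_!(\cT,\Z)^{\G_0(\fn)} \To \Z,
\]
which is perfect after inverting $p$, together with the $\T$-equivariant identification $\Lambda \cong H_!(\cT,\Z)^{\G_0(\fn)}$. Running the argument of Lemma~\ref{lemGek} with this pairing shows directly that $c$ is a power of $p$; combining with your observation that $c\mid q-1$ (Corollary~\ref{cor:cdivK}) forces $c=1$. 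No Eisenstein-ideal analysis or multiplicity-one theorem is needed.

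Your approach, by contrast, has a genuine gap at exactly the point you flag as a ``technical obstacle.'' The two ingredients you need --- that $\Phi_E[\ell]\neq 0$ forces $\fm_\ell$ to be Eisenstein, and that $\Lambda/\fm_\ell\Lambda$ is one-dimensional at Eisenstein primes --- are not available in the Drinfeld-modular literature at arbitrary level $\fn$. The suggestion that a newform decomposition ``should allow one to reduce the general case to the new part'' is not a proof, and in fact the classical analogue (Ribet's multiplicity-one result in \cite{RibetTorsion}) is specific to prime level; even in the classical case the extension to composite level requires substantial additional work. So as written, your argument does not close.
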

\begin{proof}
The proof essentially consists of showing that the condition in Lemma \ref{lemGek} is satisfied. 
This heavily relies on the arithmetic theory of Drinfeld modular curves. 

There are Hecke operators defined in terms of correspondences on $Y_0(\fn)$ which 
generate a commutative $\Z$-subalgebra $\T$ of $\End(J_0(\fn))$; see \cite[$\S$1]{Analytic} for the definitions 
and basic properties. The Hecke algebra $\T$ also naturally act on the space of 
$\Z$-valued $\G_0(\fn)$-invariant harmonic cochains $H_!(\cT, \Z)^{\G_0(\fn)}$ 
on the Bruhat-Tits tree $\cT$ of $\PGL_2(K)$; again we refer to \cite[$\S$1]{Analytic} for the definitions. 
(The $\Z$-module $H_!(\cT, \Z)^{\G_0(\fn)}$ is the analogue in this context 
of $S_2(N, \Z)$ in Remark \ref{remModDeg}.) Let $\La$ be the uniformizing lattice of $J_0(\fn)$. 
The algebra $\T$ naturally acts on $\La$; cf. (\ref{eqFunct}). A crucial fact that we need 
is that there is a canonical $\T$-equivariant isomorphism between $\La$ and $H_!(\cT, \Z)^{\G_0(\fn)}$; 
see \cite[Thm. 1.9]{Analytic} and \cite[Lem. 9.3.2]{GR}. In \cite[Thm. 3.17]{Analytic}, Gekeler defines a 
bilinear $\T$-equivariant pairing 
\begin{equation*}
\T\times H_!(\cT, \Z)^{\G_0(\fn)}\to \Z
\end{equation*}
and proves that it is perfect after tensoring with $\Z[p^{-1}]$, where $p$ is the characteristic of $\F$.  
(This pairing is the function field analogue of the well-known perfect $\Z$-valued pairing between 
the Hecke algebra and $S_2(N,\Z)$; see \cite[Thm. 2.2]{RibetModp}.) Using the facts listed above, the argument in the proof of Lemma 
\ref{lemGek} shows that $c$ is a $p$-power. On the other hand, according to Corollary \ref{cor:cdivK}, 
$c$ divides $\#\F-1$, so $c$ is coprime to $p$. This implies that $c=1$. 
\end{proof}
\end{prg}

\section{Jacobians isogenous to a product of two elliptic curves}\label{sec2}

We start by giving a very explicit, equation-based, example. We will explain later in this section how this example 
can be obtained as a special case of a general construction. 

\begin{example}\label{eg2.1}
Let $K=\Q_p$, where $p$ is odd. Let $X$ be the hyperelliptic curve of genus $2$ with two 
affine charts $y^2=f(x)$ and $Y^2=g(t)$ glued in the obvious way, where 
$$
f(x)=\left(px^2+(p-1)\right)\left((p+1)x^2+p\right)\left(x^2+1\right),
$$
$Y=y/x^3$, $t=1/x$, and $g(t)=f(x)/x^6$.  These equations 
define the minimal regular model of $X$ over $K$. Indeed, modulo $p$, 
the equation $y^2=f(x)$ becomes $y^2=-x^2(x^2+1)$, which is a curve 
with singular point $(0,0)$. It is clear from the equation $y^2=f(x)$ that 
the maximal ideal $(x,y,p)$ is a regular point on this model. Similarly, on the other 
chart, we have in reduction $Y^2=-t^2(1+t^2)$, and the maximal ideal $(t, Y, p)$ 
is again regular. Hence, the model is regular, and has a special fibre consisting 
of an irreducible rational curve with two nodes. It follows from Example 9.2/8 in \cite{NM} 
that the Jacobian $J$ of $X$ has toric reduction over $K$, and Remark 9.6/12 in \textit{loc.\ cit.} implies
that $\Phi_J(\overline{\F}_p)=1$. 

Next, let $E$ be the elliptic curve given by the equation $y^2=x(x-1)(x+p)$. The $j$-invariant of $E$ 
has valuation $\ord_K(j)=-2$, so by the Tate algorithm $E$ has multiplicative reduction over $K$ and $\Phi_E(\overline{\F}_p)\cong \Z/2\Z$.  

There is a morphism $f:X\to E$ of degree $2$ given by 
$$
(x,y)\mapsto \left(p(p+1)x^2+p^2, p(p+1)y\right). 
$$
Let $\pi: J\to E$ be the homomorphism of the Jacobians induced by $f$ by the Albanese functoriality. 
Note that the induced map on component groups
$\pi_\ast:\Phi_J(\bar\F_p)\to\Phi_E(\bar\F_p)$ is not surjective.
We claim that $\pi$ is an optimal quotient. It is enough to prove this over $\bar{K}$. If 
$\pi$ is not optimal, then it factors as $J\to E'\xrightarrow{\varphi} E$, where $\varphi$ is an isogeny of degree $>1$ (defined over $\bar{K}$). 
But then $f$ factors through $X\to E'\xrightarrow{\varphi} E$. This is not possible since the degree of $f$ is $2$. 
\end{example}

\begin{prg}\label{prgExample} Let $c\geq 2$ be an integer dividing the order of the group of roots of unity in $K^\times$.  
Assume $c$ is coprime to the characteristic of the residue field $k$. 
Let $E_1$ and $E_2$ be two elliptic curves 
over $K$ with multiplicative reduction, which are not isogenous over the algebraic closure $\bar{K}$ of $K$. 
Assume $\Phi_{E_1}(\bar{k})\cong \Phi_{E_2}(\bar{k})\cong \Z/c\Z$; equivalently, the 
$j$-invariants of $E_1$ and $E_2$ have valuation $-c$. 
Assume $E_i[c](K)=E_i[c](\bar{K})\approx \Z/c\Z\times \Z/c\Z$ ($i=1,2$); this 
condition is automatic if $E_i$ has split multiplicative reduction and satisfies the previous assumption. 
\end{prg}

\begin{prg}
Let  
\begin{equation*}
e_c: E_i[c]\times E_i[c]\to \mu_c
\end{equation*}
be the Weil pairing. Recall that the Weil pairing is alternating, i.e., $e_c(P, P)=1$ for any $P\in E_i[c]$; cf. \cite[(2.8.7)]{KM}. 
There is a canonical subgroup of $E_i[c]$ corresponding to 
$(\sE_i^0)_k[c]\cong \Z/c\Z$. Fix a generator $g_i$ of this subgroup, and a generator $\zeta$ of $\mu_c$. 
Since $e_c$ is non-degenerate, we can find 
$h_i\in E_i[c]$ such that $E_i[c]\approx \langle g_i\rangle\times \langle h_i\rangle$, and 
$e_c(g_1, h_1)=e_c(g_2, h_2)=\zeta$. 
Let $\psi: E_1[c]\xrightarrow{\sim} E_2[c]$ be the unique isomorphism such that $\psi(g_1)=h_2$ and $\psi(h_1)=g_2$. 
This is an anti-isometry with respect to the $e_c$ pairings on $E_1[c]$ and $E_2[c]$ because 
\begin{equation*}
e_c(\psi(g_1), \psi(h_1))=e_c(h_2, g_2)=e_c(g_2, h_2)^{-1}=e_c(g_1, h_1)^{-1}. 
\end{equation*}
Let $A=E_1\times E_2$ and let $G\subset A[c]$ be the graph of $\psi$:
\begin{equation*}
G=\{(P, \psi(P))\ |\ P\in E_1[c]\}. 
\end{equation*}
The product of the canonical principal polarizations on $E_1$ and $E_2$ is a principal polarization $\theta$ on the 
product variety $A=E_1\times E_2$. 
\end{prg}
\begin{prop} There is a principal polarization on the quotient abelian variety $J:=A/G$
defined by $G$ and $\theta$. With this principal polarization, $J$ is isomorphic to the  
canonically principally polarized Jacobian variety 
of a smooth projective curve $X$ defined over $K$. The Jacobian $J$ has toric reduction. 
\end{prop}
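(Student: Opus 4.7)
The plan is to carry out four steps in sequence: (i) descend the principal polarization $\theta$ on $A$ to a principal polarization $\theta_J$ on $J = A/G$; (ii) verify that $(J,\theta_J)$ is indecomposable as a principally polarized abelian surface after base change to $\bar K$; (iii) invoke Torelli's theorem over $K$ to produce the genus-$2$ curve $X$; and (iv) deduce toric reduction from the $K$-isogeny $A\to J$ together with~(\ref{IsogTorRed}). Step (i) is a standard Weil-pairing computation: the pairing $e_\theta\colon A[c]\times A[c]\to\mu_c$ induced by $\theta$ factors as the product of the Weil pairings on $E_1[c]$ and $E_2[c]$, so for two elements $(P,\psi(P)), (Q,\psi(Q))\in G$ I get
\[ e_\theta\bigl((P,\psi(P)),\,(Q,\psi(Q))\bigr) = e_c(P,Q)\cdot e_c(\psi(P),\psi(Q)) = 1 \]
by the anti-isometry property of $\psi$. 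Hence $G\subseteq G^\perp$; the identity $|G^\perp|=|A[c]|/|G|=c^2=|G|$ (coming from nondegeneracy of $e_\theta$) forces $G=G^\perp$, i.e.\ $G$ is Lagrangian. A classical descent result for polarizations then shows that $\theta$ induces a principal polarization $\theta_J$ on $J$.

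Step (ii) is the heart of the argument. The hypothesis that $E_1$ and $E_2$ are not $\bar K$-isogenous forces, via a standard projection argument, the only elliptic subvarieties of $A_{\bar K}$ to be $E_1\times 0$ and $0\times E_2$. Let $q\colon A\to J$ be the quotient map. For an arbitrary elliptic subvariety $F\subset J_{\bar K}$, its scheme-theoretic preimage $\tilde F=q^{-1}(F)$ is a $1$-dimensional closed subgroup scheme of $A_{\bar K}$ containing $G$ whose identity component is one of $E_1\times 0$ or $0\times E_2$. Using that $G\cap(E_1\times 0)=G\cap(0\times E_2)=0$ (since $\psi$ is an isomorphism), one checks that $\tilde F$ must equal either $(E_1\times 0)+G = E_1\times E_2[c]$ or $(0\times E_2)+G = E_1[c]\times E_2$. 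Thus $J_{\bar K}$ admits only two candidate elliptic subvarieties $F_1,F_2$, and
\[ F_1\cap F_2 \;=\; q\bigl((E_1\times E_2[c])\cap(E_1[c]\times E_2)\bigr)\;=\; q(A[c])\;\cong\;A[c]/G, \]
a group of order $c^2\geq 4$. In particular $J_{\bar K}$ is not a direct product of two elliptic curves, so $(J,\theta_J)$ is an indecomposable principally polarized abelian surface.

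For step (iii), I would invoke Torelli's theorem for abelian surfaces over an arbitrary base field: every indecomposable principally polarized abelian surface over $K$ is canonically isomorphic to the principally polarized Jacobian of a smooth projective geometrically irreducible genus-$2$ curve $X$ over $K$. Applied to $(J,\theta_J)$ this produces the desired $X$. For step (iv), $J$ is $K$-isogenous to $A=E_1\times E_2$, which has toric reduction since each $E_i$ has multiplicative reduction, so by~(\ref{IsogTorRed}) $J$ has toric reduction as well.

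The main obstacle I anticipate is step (ii): one needs a tight description of \emph{all} elliptic subvarieties of $J_{\bar K}$, which relies on both the non-isogeny hypothesis for $E_1,E_2$ and the observation that the two natural ``candidate factors'' pulled back to $A$ always intersect in the large group $A[c]$ of order $c^4$ rather than in $G$ itself. Steps (i), (iii), and (iv) are either routine Weil-pairing bookkeeping or direct citations to well-known theorems.
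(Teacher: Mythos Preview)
Your argument is correct and essentially unpacks the content of the references the paper cites.  The paper's proof is a three-line citation: it invokes Theorem~3 of Kani's paper on elliptic subcovers (which needs exactly the anti-isometry of $\psi$ and the non-isogeny of $E_1,E_2$) to produce $X$, then cites \cite[Prop.~3]{HLP} to justify that $X$ descends to $K$ because $\psi$ is $\Gal(\bar K/K)$-equivariant, and finally appeals to~(\ref{IsogTorRed}) for toric reduction.  Your steps (i), (ii), and (iii) are precisely what Kani's theorem is doing under the hood: the Lagrangian computation in (i) is his descent-of-polarization step, your classification of elliptic subvarieties in (ii) is his criterion for the resulting principally polarized surface to be indecomposable, and your appeal to Torelli in (iii) replaces his.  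Step (iv) coincides with the paper's.

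The trade-off is clear: the paper's version is shorter and points the reader to where the general machinery lives, while yours is self-contained and makes visible \emph{why} the two hypotheses (anti-isometry, non-isogeny) are each needed.  One small remark on step~(iii): the version of Torelli you need is the statement that a \emph{geometrically} indecomposable principally polarized abelian surface over $K$ is the Jacobian of a genus-$2$ curve defined over $K$ itself (not merely over $\bar K$).  This is true---for $g=2$ the theta divisor is the curve and descends---but it is worth being explicit, since this is the point at which the paper separately invokes \cite[Prop.~3]{HLP} and the Galois-equivariance of $\psi$.
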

\begin{proof}
The existence of $X$ follows from Theorem 3 in \cite{Kani}. It is important here that $\psi$ is an 
anti-isometry, and $E_1$ and $E_2$ are not isogenous. The curve $X$ can be defined over $K$ 
because $\psi$, by construction, is an isomorphism of Galois modules; cf. \cite[Prop. 3]{HLP}. The 
claim that $J$ has toric reduction follows from (\ref{IsogTorRed}). 
\end{proof}

\begin{lem}
$\Phi_J=1$. 
\end{lem}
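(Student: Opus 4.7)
The plan is to uniformize $J$ rigid-analytically, compute the lattice $\Lambda_J\subset\fT(K)$ explicitly in terms of lifts of generators of $G$, and then apply the description~\eqref{eq:comp.gps.monodromy} of $\Phi_J$ as the cokernel of the monodromy pairing.

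First I would fix the uniformizations. The product $A=E_1\times E_2$ is uniformized by $\fT=(\gmk)^2$ with character lattice $\cX(\fT)=\Z\chi_1\oplus\Z\chi_2$ (coordinate projections) and lattice $\Lambda_A=\Z q_{E_1}\oplus\Z q_{E_2}\subset\fT(K)$, where $\ord_K(q_{E_i})=c$. Since $A\to J=A/G$ is an isogeny, by~(\ref{eqFunct}) $J$ is uniformized by $\fT/\Lambda_J$ with the same torus, where $\Lambda_J\subset\fT(K)$ is the preimage of $G$ under $\fT(K)\to A(K)$. The assumptions of~(\ref{prgExample}) furnish a primitive $c$-th root of unity $\zeta\in K^\times$ and elements $w_i\in K^\times$ with $w_i^c=q_{E_i}$; the rationality of $w_i$ follows from $E_i[c](K)=E_i[c](\bar K)$ together with $q_{E_i}^\Z\cap\mu_c=\{1\}$. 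Under the Tate uniformization of $E_i$, the canonical generator $g_i$ of the identity-component $c$-torsion corresponds to $\zeta$, and the standard Weil-pairing identity $e_c(\zeta,w_i)=\zeta$ on a Tate curve lets us take $h_i$ to correspond to $w_i$. The generators $(g_1,h_2)$ and $(h_1,g_2)$ of $G$ therefore lift to $\lambda_1=(\zeta,w_2)$ and $\lambda_2=(w_1,\zeta)$ in $\fT(K)$. Since $c\lambda_1=(1,q_{E_2})\in\Lambda_A$ and $c\lambda_2=(q_{E_1},1)\in\Lambda_A$, we obtain $\Lambda_J=\Z\lambda_1+\Z\lambda_2$; the linear independence of $\trop(\lambda_1)=(0,1)$ and $\trop(\lambda_2)=(1,0)$ makes $\{\lambda_1,\lambda_2\}$ a $\Z$-basis.

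Finally I would conclude from~\eqref{eq:comp.gps.monodromy}, which identifies $\Phi_J$ with the cokernel of $\Lambda_J\to\Hom(\Lambda_J,\Z)$, $\mu\mapsto\ord_K[\cdot,H_J(\mu)]$, where $H_J:\Lambda_J\to\cX(\fT)$ is the Riemann form attached to the principal polarization of $J$ furnished by the preceding proposition. This map factors as $\Lambda_J\xrightarrow{H_J}\cX(\fT)\xrightarrow{v}\Hom(\Lambda_J,\Z)$ with $v(\chi)(\lambda)=\ord_K\chi(\lambda)$. Both arrows are isomorphisms of rank-$2$ free abelian groups: the first because the polarization is principal (so $H_J$ is an isomorphism of free $\Z$-modules of the same rank), and the second because, in the bases $\{\chi_1,\chi_2\}$ and the basis dual to $\{\lambda_1,\lambda_2\}$, the matrix of $v$ is $\begin{pmatrix}0&1\\1&0\end{pmatrix}$, which is unimodular. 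Thus the composition is an isomorphism, its cokernel vanishes, and $\Phi_J=1$. The only subtle ingredient is the Weil-pairing identity used to identify the lifts $\lambda_i$, which is a standard calculation on Tate curves.
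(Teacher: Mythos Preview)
Your argument is correct, but it follows a genuinely different route from the paper's proof of this lemma. The paper argues algebraically: it observes that the specialization map $G\to\Phi_A\cong\Phi_{E_1}\times\Phi_{E_2}$ is injective (because the identity-component $c$-torsion of $\sA_k$ is exactly $\langle g_1\rangle\times\langle g_2\rangle$, which meets $G$ trivially by the choice of $\psi$), hence an isomorphism by counting, and then invokes a general result (\cite[Thm.~4.3]{PapikianJL}) identifying $\Phi_{A/G}$ with $\Phi_A/G$ for an \'etale isogeny with this property. Your approach instead realizes $\Lambda_J$ explicitly and reads off $\Phi_J$ from the monodromy pairing; this is essentially the analytic reinterpretation the paper carries out later in Section~5 (compare your $\lambda_1,\lambda_2$ with the generators $(q_1,\zeta),(\zeta,q_2)$ of~(\ref{analyticJ})). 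Two small remarks: first, the hypotheses of~(\ref{prgExample}) allow non-split multiplicative reduction (which actually occurs when $c=2$), so you should note at the outset that one may pass to the unramified quadratic extension by~(\ref{UnRam}) before uniformizing. Second, your detour through $H_J$ is unnecessary: the component group is already the cokernel of your map $v:\cX(\fT)\to\Hom(\Lambda_J,\Z)$ (this is the unpolarized form of Grothendieck's description), so the unimodularity of $v$ alone gives $\Phi_J=1$ without needing to know anything about the principal polarization on $J$.
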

\begin{proof} Clearly $G\subset A(K)$ is a subgroup isomorphic to $\Z/c\Z\times \Z/c\Z$. 
By (\ref{NerMod}), $G$ extends to a 
constant \'etale subgroup-scheme of $\sA$. The restriction to the closed fibre 
gives an injection $G\hookrightarrow \sA_k(k)$, which composed 
with $\sA_k\to \Phi_A$ gives a canonical homomorphism $\phi:G\to \Phi_A$. 
It is clear that $\Phi_A\cong \Phi_{E_1}\times \Phi_{E_2}$. 
Since 
$$
\sA_k^0[c]\cong \{(P_1, P_2)\ |\ P_i\in \langle g_i\rangle\}, 
$$ 
$G\cap \sA_k^0=0$. Therefore, $\phi$ is an isomorphism. Now Theorem 4.3 in \cite{PapikianJL} 
implies that $$\Phi_J\cong \Phi_A/G=1.$$ 
\end{proof}

\begin{lem}
$E_1$ and $E_2$ are optimal quotients of $J$. 
\end{lem}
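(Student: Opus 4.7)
The plan is to exploit the duality between closed immersions and optimal quotients recorded in (\ref{IsogTorRed}): a morphism $f:B\to A$ of abelian varieties is a closed immersion if and only if $f^\vee: A^\vee\to B^\vee$ is an optimal quotient. Combined with the principal polarization on $J$ furnished by the previous proposition and the canonical principal polarizations on $E_1$ and $E_2$ used to identify $J\cong J^\vee$ and $E_i\cong E_i^\vee$, it suffices to prove that the composition
\[ \iota_i:\ E_i\hookrightarrow A\twoheadrightarrow A/G=J \]
is a closed immersion, where $E_i\hookrightarrow A$ is the inclusion of the $i$-th factor.

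First I would compute the scheme-theoretic kernel of $\iota_i$, which is the intersection $E_i\cap G$ inside $A$. Set-theoretically, for $i=1$ we have
\[ E_1(\bar K)\cap G(\bar K) = \{(P,0)\in G\ |\ P\in E_1[c],\ \psi(P)=0\}=\{(0,0)\}, \]
since $\psi:E_1[c]\xrightarrow{\sim}E_2[c]$ is an isomorphism; the case $i=2$ is symmetric. Because $c$ is coprime to the residue characteristic $p$, and $K$ has characteristic either $0$ or $p$, the group scheme $A[c]$ is étale over $K$, hence so is its subgroup scheme $G$ and the intersection $E_i\cap G\subset A[c]$. An étale group scheme with trivial $\bar K$-points is trivial, so $\ker(\iota_i)=0$ scheme-theoretically.

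Next I would invoke the general fact that a morphism of abelian varieties with trivial scheme-theoretic kernel is a closed immersion: it factors as an isogeny onto its image (an abelian subvariety) followed by a closed immersion, and the isogeny must be an isomorphism. Hence $\iota_i:E_i\hookrightarrow J$ is a closed immersion, and dualizing gives an optimal quotient $\iota_i^\vee:J^\vee\to E_i^\vee$. Transporting along the principal polarizations yields an optimal quotient $J\to E_i$, as required.

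The only point requiring care—rather than a real obstacle—is verifying that the identifications $J\cong J^\vee$ and $E_i\cong E_i^\vee$ are compatible with the dualization, so that $\iota_i^\vee$ really produces a morphism onto the same $E_i$ we started with. This is guaranteed by the construction: the principal polarization on $J$ is descended, via the isotropic subgroup $G$, from the product polarization $\theta=\theta_{E_1}\times\theta_{E_2}$ on $A$, and under $\theta$ the dual of the factor inclusion $E_i\hookrightarrow A$ is the factor projection $A\to E_i$; both are compatible with passage to $A/G$.
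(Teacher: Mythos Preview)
Your proof is correct and follows essentially the same approach as the paper: show $E_i\cap G=0$ so that $E_i\hookrightarrow J$ is a closed immersion, then invoke (\ref{IsogTorRed}) to dualize to an optimal quotient. Your additional care with the \'etaleness of $A[c]$ and the polarization compatibility is more detail than the paper provides, and the paper also records a second, direct argument (that $J/E_1\cong E_2/E_2[c]\cong E_2$), but the core idea is the same.
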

\begin{proof}
Note that $E_i$ embeds into $J$ as a closed subvariety since $E_i\cap G =0$. The claim then 
follows from (\ref{IsogTorRed}). 
Alternatively, note that the quotient $J/E_1$ 
is isomorphic to $E_2/E_2[c]\cong E_2$, so, by definition, $E_2$ is an optimal quotient of $J$. 
\end{proof}

\begin{prg} In the special case when $c=2$, Proposition 4 in \cite{HLP} allows to compute an explicit equation for $X$ 
starting with equations for $E_1$ and $E_2$. Moreover, in this case the assumption that $E_1$ and $E_2$ 
are not isogenous can be relaxed to the assumption that $E_1$ and $E_2$ are not isomorphic over $\bar{K}$, i.e., 
have distinct $j$-invariants; see \cite[Thm. 3]{Kani}. With this in mind, consider the Legendre curves 
$$
E_1: y^2=x(x-1)(x-p) \quad \text{and}\quad E_2: y^2=x(x-1)(x+p) 
$$
over $\Q_p$, where $p$ is odd. These curves have distinct $j$-invariants, multiplicative reduction, 
$\Phi_{E_i}\cong \Z/2\Z$, and $E_i[2]$ is $\Q_p$-rational. (Note that $E_i$ 
has split multiplicative reduction if and only if $-1$ is a square modulo $p$.)

Let $P_1=(1,0)$, $P_2=(0,0)$ and $P_3=(p,0)$ be the non-trivial elements of $E_1[2]$. 
Similarly, let $Q_1=(1,0)$, $Q_2=(0,0)$ and $Q_3=(-p,0)$ be the non-trivial elements of $E_2[2]$. 
Modulo $p$ the point $P_1$ lies in the smooth locus of the reduction of $E_1$, 
hence its specialization lies in the connected component $\sE_1^0$ of the identity. Define $\psi$ by 
$$
\psi(O)=O, \quad \psi(P_1)=Q_2, \quad \psi(P_2)=Q_1, \quad \psi(P_3)=Q_3. 
$$
Using the formulas in \cite[Prop. 4]{HLP}, one obtains the equation in Example \ref{eg2.1}. 
\end{prg}

\begin{rem} When $c\geq 3$, it seems rather difficult to write down an explicit equation for $X$. 
Below we will compute the 
$p$-adic periods of $J$ from the Tate periods of $E_1$ and $E_2$. In \cite{Teitelbaum}, Teitelbaum  
developed a method for computing an equation for a genus $2$ curve $X$ 
with split degenerate reduction from the periods of its Jacobian. Teitelbaum's formulae are $p$-adic, 
i.e. the coefficients of the equation of $X$ are given by infinite series.  
\end{rem}

In order to illustrate the machinery of Section~\ref{sec3}, 
we give an analytic interpretation of our previous algebraic construction, with some 
generalizations.  

\begin{prg} \label{analyticJ}
  Let $\fT = (\gm_{m,K}^2)^\an$ be a two-dimensional split analytic torus
  over $K$.  Fix $q_1,q_2\in K^\times$ such that
  $\ord_K(q_1),\ord_K(q_2)>0$ and $q_1^u\neq q_2^w$ for any
  non-zero $u,w\in\Z$.  Let $c > 1$ be an integer and let 
  $\zeta\in K^\times$ be a $c$-th root of unity.
  Let $\Lambda\subset\fT(K) = (K^\times)^2$ be the free abelian group
  generated by $(q_1,\zeta)$ and $(\zeta,q_2)$.  We have
  $\trop(q_1,\zeta) = (-\log|q_1|,0)$ and
  $\trop(\zeta,q_2) = (0,-\log|q_2|)$ which are linearly independent in 
  $\R^2$, so $\Lambda$ is a lattice in $\fT$.  Let
  $J^\an$ be the analytic quotient $\fT/\Lambda$.
\end{prg}

\begin{prg}\label{prg5.2}
  We identify $(n_1,n_2)\in \Z^2$ with the character of $\fT$ defined by
  $(Z_1,Z_2)\mapsto Z_1^{n_1}Z_2^{n_2}$.  Define 
  $H: \Lambda\isom\Z^2$ by 
  \[ H(q_1,\zeta) = (1,0) \quad\text{ and }\quad
  H(\zeta,q_1) = (0,1). \]
  We have
  \[ H(q_1,\zeta)(q_1,\zeta) = q_1 \quad
  H(q_1,\zeta)(\zeta,q_2) = \zeta =
  H(\zeta,q_2)(q_1,\zeta) \quad
  H(\zeta,q_2)(\zeta,q_2) = q_2, \]
  so $H(\lambda)(\mu) = H(\mu)(\lambda)$ for all $\lambda,\mu\in\Lambda$.
  Moreover, the symmetric bilinear form 
  $\angles{\cdot,\cdot}_H$ has the matrix form 
  $\smallmat{\ord_K(q_1)}00{\ord_k(q_2)}$ with respect to the above choice
  of basis, so $\angles{\cdot,\cdot}_H$ is positive definite.  Therefore
  by Theorem~\ref{thmRF}, $J^\an$ is the analytification of an abelian
  variety $J$, and the Riemann form $H$ gives rise to a principal
  polarization of $J$ by (\ref{polarization}).
\end{prg}

\begin{prg}
  By an \emph{elliptic subvariety} of $J$ we will mean an abelian
  subvariety $E$ of $J$ of dimension one.  By (\ref{IsogTorRed}), any
  elliptic subvariety of $J$ has split multiplicative reduction; moreover,
  if $0\to\Gamma\to\C_K^\times\to E(\C_K)\to 0$ is the Tate uniformization
  of $E$ then we have a homomorphism of short exact sequences
  \begin{equation} \label{eq:elliptic.subvar} \xymatrix @=.25in{
    0 \ar[r] & \Gamma \ar[r] \ar[d] & {\C_K^\times} \ar[r] \ar[d]^\varphi & 
    {E(\C_K)} \ar[r] \ar[d] & 0 \\
    0 \ar[r] & \Lambda \ar[r] & {(\C_K^\times)^2} \ar[r] & 
    {J(\C_K)} \ar[r] & 0
  }\end{equation}
  with injective vertical arrows.  In particular, 
  $\varphi(\C_K^\times)\cap\Lambda = \varphi(\Gamma)$.  Conversely, let
  $\gm_{m,K}^\an\cong\fT'\subset\fT$ be a subtorus of dimension one such
  that $\Gamma = \fT'(K)\cap\Lambda\cong\Z$ (equivalently, such that
  $\fT'(\C_K)\cap\Lambda\neq\{1\}$), and let $E^\an = \fT'/\Gamma$.  Then
  $E^\an$ is the analytification of an elliptic curve $E$ over $K$ and the
  induced map $E\to J$ is a closed immersion, so $E$ is an elliptic subvariety of
  $J$ and the diagram (\ref{eq:elliptic.subvar}) commutes.
\end{prg}

\begin{prop} \label{prop:elliptic.subvars}
  Let $J$ be as in (\ref{analyticJ}).  There are exactly two elliptic
  subvarieties of $J$, given by
  \[ E_1(\C_K) = \C_K^\times\times\{1\}/(q_1^c,1)^\Z \quad\text{ and }\quad
  E_2(\C_K) = \{1\}\times\C_K^\times/(1,q_2^c)^\Z. \]
\end{prop}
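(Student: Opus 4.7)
The plan is to enumerate one-dimensional subtori $\fT'\subset\fT$ and test the criterion $\fT'(\C_K)\cap\Lambda\neq\{1\}$ identified in the paragraph preceding the proposition. Since $\fT\cong\gm_{m,K}^{2}$, every one-dimensional subtorus is cut out by a primitive character, so I would write
\[ \fT' = \{(Z_1,Z_2)\in\fT \mid Z_1^{n_1}Z_2^{n_2}=1\} \]
for a unique (up to sign) pair $(n_1,n_2)\in\Z^2$ with $\gcd(n_1,n_2)=1$. The two tori appearing in the statement correspond to $(n_1,n_2)=(1,0)$ and $(0,1)$; the task is to rule out all other primitive pairs.

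Next I would substitute a general element of $\Lambda$, namely
\[ (q_1,\zeta)^a (\zeta,q_2)^b = (q_1^a\zeta^b,\,\zeta^a q_2^b), \]
into the equation defining $\fT'$. This gives the single condition
\[ q_1^{an_1}\, q_2^{bn_2}\, \zeta^{an_2+bn_1} = 1. \]
Raising to the $c$-th power kills the root of unity and reduces to
\[ q_1^{can_1} = q_2^{-cbn_2}. \]
The hypothesis $q_1^u\neq q_2^w$ for all nonzero $u,w\in\Z$ (together with $c\neq 0$) then forces $an_1=0$ or $bn_2=0$.

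A short case analysis finishes the argument. If $(a,b)\neq(0,0)$ and $an_1=0$, I would split: either $a=0$, in which case the original equation reduces to $q_2^{bn_2}\zeta^{bn_1}=1$; taking $\ord_K$ and using $\ord_K(q_2)>0$ forces $bn_2=0$, and since $b\neq 0$ we get $n_2=0$, i.e.\ $\fT'=\{1\}\times\gm_m$. The mirror subcase $n_1=0$ gives $\fT'=\gm_m\times\{1\}$; and the case $bn_2=0$ is handled symmetrically. For $(n_1,n_2)=(1,0)$, the intersection with $\Lambda$ consists of elements $(\zeta^b,q_2^b)$ with $\zeta^b=1$, hence $c\mid b$; this intersection is generated by $(1,q_2^c)$, recovering $E_2$. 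The case $(n_1,n_2)=(0,1)$ symmetrically recovers $E_1$.

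The only step with any subtlety is handling the root of unity $\zeta$: one must raise to the $c$-th power before invoking the multiplicative independence hypothesis on $q_1,q_2$. Everything else is bookkeeping, so I would not expect a real obstacle.
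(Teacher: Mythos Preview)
Your proposal is correct and follows essentially the same approach as the paper: parametrize one-dimensional subtori by primitive characters, substitute a general lattice element, raise to the $c$-th power to kill $\zeta$, and use the multiplicative independence of $q_1,q_2$ to force the character to be a coordinate axis. The only cosmetic difference is that the paper concludes $an_1=bn_2=0$ in one step (implicitly using the valuation of $q_i$), whereas you first get $an_1=0$ or $bn_2=0$ and recover the other vanishing inside your case analysis; you also explicitly verify the intersection lattices $\Gamma_i$, which the paper leaves as clear.
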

\begin{proof}
  It is clear that $E_1$ and $E_2$ are elliptic subvarieties of $J$.  Any
  dimension-one subtorus $\fT'$ of $\fT$ is of the form
  \[ \fT'(\C_K) = \{ (z,w) ~|~ z^\alpha w^\beta = 1 \} \]
  for some coprime integers $\alpha,\beta\in\Z$.  Let $\fT'$ be such a
  subtorus, and suppose that $\fT'(K)\cap\Lambda\neq\{1\}$.  Let
  $\lambda\in\Lambda\setminus\{1\}$ be an element of
  $\fT'(K)\cap\Lambda$.  Then 
  \[ \lambda = (q_1,\zeta)^\gamma(\zeta,q_2)^\delta
  = (q_1^\gamma\zeta^\delta,\, q_2^\delta\zeta^\gamma) \]
  for some integers $\gamma,\delta$, not both equal to zero, and we have
  \[ q_1^{\alpha\gamma}q_2^{\beta\delta}\zeta^{\alpha\delta+\beta\gamma}
  = 1. \]
  Raising both sides to the $c$-th power gives
  $q_1^{\alpha\gamma c}q_2^{\beta\delta c} = 1$, so we must
  have $\alpha\gamma = \beta\delta = 0$ by the way we chose $q_1,q_2$.
  If $\alpha\neq 0$ and $\beta\neq 0$ then $\gamma=\delta=0$, which
  contradicts our choice of $\lambda$.  Hence either $\alpha = 0$ and
  $\beta=\pm 1$, in which case $\fT'(\C_K) = \C_K^\times\times\{1\}$, or
  $\beta = 0$ and $\alpha = \pm 1$, in which case
  $\fT'(\C_K) = \{1\}\times \C_K^\times$.
\end{proof}

\begin{prg}
  Let $\Lambda'$ be the sublattice of $\Lambda$ generated by $(q_1^c,1)$ and
  $(1,q_2^c)$.  Identify $E_1$ (resp.\ $E_2$) with $\C_K^\times/q_1^{c\Z}$
  (resp.\ $\C_K^\times/q_2^{c\Z}$) in the obvious way.
  Let $A = E_1\times E_2$, so 
  $A(\C_K) = (\C_K^\times)^2/\Lambda'$, and the kernel of the
  multiplication map $A\to J$ is $\Lambda/\Lambda'\cong(\Z/c\Z)^2$.
  Since $E_1$ and $E_2$ are the only elliptic subvarieties of $J$, it
  follows that $J$ is not isomorphic to a product of elliptic curves.
  Therefore the theta divisor of $J$ is a smooth curve $X$ of genus $2$,
  and $J$ is isomorphic to the Jacobian of $X$ as principally polarized
  abelian varieties.
\end{prg}

\begin{prg}
  Since $E_1$ and $E_2$ are subvarieties of $J$, for $i=1,2$ the dual
  homomorphism $J\to E_i$ is an optimal quotient by (\ref{IsogTorRed}).  
  Let $\Gamma_1 = (q_1^c,1)^\Z$ and $\Gamma_2 = (1,q_2^c)^\Z$, and for
  $i=1,2$ let $\Gamma_i'$ be the saturation of $\Gamma_i$ in $\Lambda$.
  Then $\Gamma_1' = (q_1,\zeta)^\Z$ and $\Gamma_2' = (\zeta,q_2)^\Z$.  
  It follows from~\eqref{eq:coker.compgps} that the cokernel of the map on
  component groups $\Phi_J\to\Phi_{E_i}$ is isomorphic to $\Z/c\Z$.  In
  particular, $\Phi_J\to\Phi_{E_i}$ is not surjective. 
  Note that the image of $\La$ in $\C_K^\times$ under the evaluation map $\ev_{E_i}$ 
  is generated by $\zeta$ and $q_i$ -- this is immediate from the definition of $H$ in (\ref{prg5.2}). 
  This illustrates the surjectivity of 
  the map $\Lambda\to c^{-1}\Gamma_i$ of~\eqref{eq:ev.E}.

\end{prg}

\begin{prg}
  A calculation involving $p$-adic $\Theta$-functions shows that the
  Weil pairing on the $c$-torsion of the Tate curve $E_i$ is given by the rule
  $e_c(\zeta,q_i) = \zeta$.  Note that $\zeta\in E_i$
  generates the subgroup of $E_i[c]$ which reduces to the identity
  component of the N\'eron model of $E_i$.  Let $\psi:E_1[c]\to E_2[c]$ be
  the unique isomorphism such that 
  $\psi(\zeta) = q_2$ and $\psi(q_1) = \zeta$.  Then the graph
  \[ G=\{(P, \psi(P))\ |\ P\in E_1[n]\} \]
  is exactly the kernel of the map $A = E_1\times E_2\to J$, so this
  analytic construction coincides with our algebraic construction, 
  at least when $c = \ord_K(q_1) = \ord_K(q_2)$.
\end{prg}

\begin{prg}
  Let $E = E_1$.  In the notation of Section~\ref{sec3}, we have
  $q_E = q_1^c$, so $\ord_K(q_E) = c\cdot\ord_K(q_1)$.  We can take
  $\lambda_E = (q_1,\zeta)\in\Lambda$, so 
  \[ \angles{\lambda_E,\lambda_E} 
  = \ord_K H(q_1,\zeta)(q_1,\zeta) = \ord_K(q_1). \]
  It is clear that 
  $m = \min\{\angles{\lambda,\lambda_E}>0~|~\lambda\in\Lambda\}$
  is equal to 
  $\angles{\lambda_E,\lambda_E} = \ord_K(q_1) = \ord_K(q_E)/c$.  
  Hence $c\angles{\lambda_E,\lambda_E} = \ord_K(q_E)$, so
  $c = n$ by~\eqref{eq:c2.lambdaE.lambdaE}, and hence $r=1$
  by~\eqref{eq:r.n.c}.  
  The fact that $r=1$ is easy to see directly, as the
  idempotent $e$ corresponds to the endomorphism $(a,b)\mapsto(a,0)$ of
  the character group $\Z^2\cong\Lambda$ of $\fT$, so $e\in\End(\Lambda)$.
  The equality $n = c$ is then clear as well since the smallest power of
  the endomorphism $(x,y)\mapsto(x,1)$ of $\gm_{m,K}^2$ sending $\Lambda$
  to itself is $c$.  

\end{prg}


\bibliographystyle{amsplain}
\bibliography{Example-13-08-31}

\end{document}